\numberwithin{equation}{section}
\theoremstyle{plain}  
\newtheorem{thm}{Theorem}[section]                %stile 
\newtheorem{cor}[thm]{Corollary}        %definizione 
\newtheorem{lem}[thm]{Lemma}            %definizione 
\newtheorem*{thma}{Theorem A}
\newtheorem*{teo*}{Theorem}
\newtheorem*{thmd}{Particular Case D}
\newtheorem*{thmb}{Particular Case B}
\newtheorem*{thmc}{Theorem C}
\newtheorem*{prop*}{Proposition}
\newtheorem{prop}[thm]{Proposition}
\theoremstyle{definition} 
\newtheorem*{dfn*}{Definition}
\newtheorem{dfn}{Definition}
\theoremstyle{remark}                   %stile per osservazioni
\newtheorem{ntz}{Notation}[section]         %definizione ambiente
\title{Syzygies and equations of Kummer varieties}
\author{Sofia Tirabassi}
\address{Institute of Mathematics\\
Warsaw University\\
ul. Banacha 2 \\
 02-097 Warszawa
(Poland)}
\email{tirabassi@mimuw.edu.pl}
\keywords{Syzygies, Koszul cohomology, Kummer varieties.}
\subjclass[2010]{14M99, 14K99 Primary, 14K05, 14F05 Secondary}
\date{}
\definecolor{grigio}{gray}{.8}
\definecolor{viola}{rgb}{.4,0,.6}
\definecolor{rosso}{rgb}{.7,0,.1}
\definecolor{verde}{rgb}{0,.3,0}
\begin{document}
 \maketitle
\begin{abstract}
 \noindent We study property $N_p$ for Kummer varieties embedded by a power of an ample line bundle.
\end{abstract}
\section{Introduction}
%Tutto il bla bla bla su $N_p^r$ e risultati affini.
Let $X$ be an abelian variety. Its \textit{associated Kummer
variety}, $K_X$, is the quotient of $X$ by the natural $(\Z/2\Z)$-action induced
by
the morphism $i_X:X\rightarrow X$ defined by $x\mapsto -x$.
Given a Kummer variety, $K_X$, and an ample line bundle $A$ on
$K_X$, a result of Sasaki (\cite{Sasaki1})
states that $A^{\otimes m}$ is very ample and the embedding it
defines is projectively normal as soon as $m\geq 2$. Later
Khaled (\cite{Khaled2}) and Kempf (\cite{Kempf1}) proved
that, under the same conditions, the homogeneous ideal of $K_X$ is
generated by elements of degree 2 and 3, while, if $m\geq 3$ it
is generated in degree 2. Concerning the case $m=1$, if, furthermore, we assume that $A$
is a normally generated, very ample, line bundle on $X$, then the
homogeneus ideal of $K_X$ is generated in degree less than or equal to
4. In this
paper we prove that these statements are particular cases of
more general results on the syzygies of the variety $K_X$.\par
More precisely, let $Z$ an algebraic variety over an
algebraically closed field
$k$ and let $\fas{A}$ an ample
invertible sheaf on $Z$, generated by its global sections. With
$R_\fas{A}$ we will indicate the \textit{section
ring
associated to
the sheaf
$\fas{A}$}:
$$R_\fas{A}:=\bigoplus_{n\in\mathbb{Z}}H^{0}({Z},{\fas{A}^{
\otimes n}}),$$
while $S_{\fas{A}}$ will be the symmetric algebra of
$H^{0}({Z},{\fas{A}})$. The ring $R_\A$ is a finitely
generated graded
$S_\fas{A}$-algebra and as such it admits a \textit{minimal free
resolution} 
\begin{equation}\label{minalfree}
E_\bullet = 0\rightarrow\cdots\xrightarrow{f_{p+1}}
E_p\xrightarrow{f_p}\cdots\xrightarrow{f_2} E_1\xrightarrow{f_1}
E_0\xrightarrow{f_0}R_\A\rightarrow 0
\end{equation}
with  $E_i\simeq\bigoplus_j S_\A(-a_{ij})$, $a_{ij}\in\mathbb  Z_{>0}$.
In order to extend classical results of Castelnuovo, Mattuck,
Fujita and Saint-Donat on the projective embeddings of curves,
Green (\cite{GI}) introduced the following:
\begin{dfn*}[Property $N_p$
%\cite{Laz}
]
Let $p$ be a given integer. The line bundle $\fas{A}$ satisfies
property $N_p$ if, in the notations above,
$$E_0=S_\fas{A}%\quad\text{when $p\geq 0$},
$$
and
$$E_i=\oplus S_\fas{A}(-i-1)\quad 1\leq i\leq p.$$
\end{dfn*}
\noindent Pareschi (\cite{Pa}) extended the
above
condition as follows : we say that, given a non-negative integer $r$, property
$\sottosopra{N}{0}{r}$ holds for $\A$ if,
in
the notation above, $a_{0j}\leq 1+r$ for
every $j$. Inductively we say that $\A$ satisfies property
$N_p^r$
if $N_{p-1}^r$ holds for $\A$ and $a_{pj}\leq p+1+r$ for every
$j$.\\

Green in \cite{GI} proved that, if $Z$  is a
smooth curve of genus $g$ and $\A$ a very ample line bundle on
$Z$ then $\A$ satisfies $N_p$ if 
$\deg \A\geq 2g+1+p$. He also conjectured  the behaviour of
property $N_p$ for the canonical bundle of a non-hyperelliptic
curve. %that,the if $C$ is a
%smooth non-hyperelliptic curve and $K_C$ is its canonical
%divisor, then
%\begin{center}
% $\OO{C}(K_C)$ satisfies $N_p$ if and only if
%$p<\mathrm{Cliff}(C)$;
%\end{center}
%where $\mathrm{Cliff}(C)$ is the Clifford index of the
%curve(cfr. \cite[Section 9A]{Eisenbud}). 
Green's conjecture was
recently proved for the general curve by Voisin (\cite{V1,V2}).\\
\indent For what concerns higher dimensional varieties, the syzygies
of the
projective space were studied by Green in \cite{GII}, %where
%he proved that $\OO{\mathbb P^n}(d)$ satisfies $N_p$ for $d\geq
%p\geq 1$, by Ottaviani--Paoletti 
\cite{OP2001}, and in the recent preprint \cite{Ein2011}. %, where
%the
%authors studied the asymptotic behaviour of syzygies of
%projective varieties in general, and those of $\mathbb{P}^n$
%in particular, demonstrating some theorems conjectured in
%\cite{OP2001}. 
For
arbitrary smooth varieties there is a general conjecture of Mukai
and in \cite{EL} Ein--Lazarsfeld proved that, if $Z$ is of dimension $n$, denoting by
$\omega_Z$ its canonical line bundle on $Z$, then for any $\LL$ very
ample on $Z$ the sheaf
$$\A:=\omega_Z\otimes\LL^{\otimes (n+1+d)}$$
satisfies $N_p$ for every $d\geq p\geq 1$.\par
 Abelian varieties distinguish themselves among other smooth
varieties since, at least in what
concerns their syzygies, they tend to behave in any dimension
like elliptic curves.
More precisely, Koizumi (\cite{Ko1976}) proved
that, given $\A$ ample on an abelian variety $X$, then for $m\geq
3$, $\A^{\otimes m}$ embeds $X$ in the projective space as a
projectively normal variety. Furthermore, a classical theorem of
Mumford\index{Mumford D.} (\cite{Mum-ab}), improved by
Kempf (\cite{Ke1}), states that the homogeneous
ideal of $X$ is generated in degree 2 as long as $m\geq 4$. These
results inspired Lazarsfeld to conjecture
that $\A^{\otimes m}$ satisfies $N_p$ for every $m\geq p+3$. A generalized version of Lazarsfeld's
conjecture, involving property
$N_p^r$ rather than simply $N_p$, was proved in \cite{Pa}; later
in \cite{PPII} Pareschi--Popa
were able to recover and improve Pareschi's statements as a
consequence of the powerful,
Fourier-Mukai based,
theory of $M$-regularity that they
developed in \cite{PPI}.\par
Given the results on projective normality and degree of defining
equations of Sasaki and Khaled, it was natural to conjecture that a bound $m_0(p,r)$, \emph{independent of the
dimension of $K_X$},
could be found  such that $\A^{\otimes m}$ satisfies $N_p^r$
for every $m\geq m_0(p,r)$.
%I have been brooding over 
In this paper we present some results in this
direction. The main idea behind the proofs is that
ample line bundles on a Kummer variety $K_X$ have a nice
description in terms of ample line bundles on $X$. More
precisely, denoting by $\pi_X:X\rightarrow
K_X$ the quotient map, then for every $A$ ample on $K_X$ there
exists $\A$ ample on $X$ such that $\pi_X^*A\simeq\A^{\otimes
2}$. Hence we can use Pareschi--Popa machinery to find some
results on $\A^{\otimes 2m}$ and then study how the $\Z/2\Z$
action fits in the framework. Below we list the main
results we
obtained.

\begin{thma}
 Fix two non-negative integers $p$ and $r$ such that
$\mathrm{char}(k)$ does not divide $p+1,\; p+2$. Let $A$ be an
ample line
bundle on a Kummer variety $\kummerx$. Then:
\begin{enumerate}
\item[(a)] 
$A^{\otimes n}$ satisfies property
$N_p$ for every $n\in\Z$ such that $n\geq p+2$;
\item[(b)] more generally $A^{\otimes n}$ satisfies property
$N_p^r$ for every $n$ such that $(r+1)n\geq p+2$.
\end{enumerate}
\end{thma}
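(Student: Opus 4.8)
The plan is to reduce property $N_p^r$ for $A^{\otimes n}$ to a single cohomological vanishing on $K_X$, transport that vanishing to the abelian variety $X$ through the degree-two map $\pi_X$, and there feed it into the Pareschi--Popa bounds for powers of ample line bundles. First I would record the Green--Lazarsfeld-type reduction. Writing $V=H^0(K_X,A^{\otimes n})$ and $M_{A^{\otimes n}}=\ker\bigl(V\otimes\mathcal O_{K_X}\to A^{\otimes n}\bigr)$, and using that $A^{\otimes n}$ has no higher cohomology on $K_X$, property $N_p^r$ for $A^{\otimes n}$ follows once
$$H^1\bigl(K_X,\ \wedge^{i+1}M_{A^{\otimes n}}\otimes A^{\otimes ns}\bigr)=0\qquad(0\le i\le p,\ s\ge r+1),$$
since $K_{i,q}(K_X,A^{\otimes n})\cong H^1(\wedge^{i+1}M_{A^{\otimes n}}\otimes A^{\otimes n(q-1)})$ in this range. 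The case $i=0$ encodes the generation statement $N_0^r$; for part (a), where $n\ge p+2\ge2$, the base step $N_0$ is Sasaki's projective normality \cite{Sasaki1}. Note that the hypothesis on $\operatorname{char}(k)$ forces $\operatorname{char}(k)\ne2$ (one of $p+1,p+2$ is always even), and more generally is exactly what keeps the exterior powers and Koszul differentials below well behaved.

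Second, I would push the computation to $X$. Because $\pi_X$ is finite flat and $\operatorname{char}(k)\ne2$, one has $\pi_{X*}\mathcal O_X=\mathcal O_{K_X}\oplus(\pi_{X*}\mathcal O_X)^-$, so for every sheaf $\mathcal G$ on $K_X$ the group $H^j(K_X,\mathcal G)$ is the $\iota$-invariant part of $H^j(X,\pi_X^*\mathcal G)$, where $\iota=i_X$. Since $\pi_X^*A^{\otimes n}=\mathcal A^{\otimes 2n}$, the evaluation sequence pulls back to
$$0\to N\to V\otimes\mathcal O_X\to\mathcal A^{\otimes 2n}\to0,\qquad N:=\pi_X^*M_{A^{\otimes n}},$$
and the required vanishing becomes $H^1\bigl(X,\wedge^{i+1}N\otimes\mathcal A^{\otimes 2ns}\bigr)^{+}=0$.

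Third, I would relate $N$ to the honest Lazarsfeld bundle $M_{\mathcal A^{\otimes 2n}}$ of the complete linear system. Setting $W=H^0(X,\mathcal A^{\otimes 2n})=V\oplus W^-$ (eigenspace decomposition under $\iota$, with $V=W^{+}$), the inclusion $V\subset W$ yields
$$0\to N\to M_{\mathcal A^{\otimes 2n}}\to W^-\otimes\mathcal O_X\to0.$$
Its exterior-power (Koszul) filtration, whose graded pieces are $\wedge^{j}M_{\mathcal A^{\otimes 2n}}\otimes\wedge^{i+1-j}W^-$, expresses the cohomology of $\wedge^{i+1}N\otimes\mathcal A^{\otimes 2ns}$ in terms of the groups $H^{\ge1}\bigl(X,\wedge^{j}M_{\mathcal A^{\otimes 2n}}\otimes\mathcal A^{\otimes 2ns}\bigr)$ for $j\le i+1\le p+1$. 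These are precisely the groups Pareschi--Popa control, and their vanishing for the high power $\mathcal A^{\otimes 2n}$ is what proves $N_p^r$ for powers of ample bundles on abelian varieties \cite{Pa,PPII}. Extracting the $\iota$-invariant summand is then harmless, being the splitting off of a direct summand.

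The hard part will be the accounting in the third step: tracking the $\iota$-action through the Koszul filtration so as to isolate the invariant summand, while simultaneously matching the numerical thresholds. One must check that the Pareschi--Popa input for $\mathcal A^{\otimes 2n}$, together with the extra trivial quotient $W^-\otimes\mathcal O_X$ and the higher-cohomology terms forced by the filtration, assembles to vanishing in exactly the stated range $(r+1)n\ge p+2$ (and $n\ge p+2$ when $r=0$), rather than the naive range $2n\ge p+3$ that the relation $\pi_X^*A=\mathcal A^{\otimes2}$ might suggest. Verifying that passing to the invariant part does not demand a stronger hypothesis than the stated bound—i.e. that the anti-invariant contributions either vanish or are already covered—is where the genuine work lies.
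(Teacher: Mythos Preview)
Your third step contains a genuine gap, and it is not just the ``accounting'' you flag at the end: the Koszul filtration runs in the wrong direction. From the exact sequence
\[
0 \longrightarrow N \longrightarrow M_{\mathcal{A}^{\otimes 2n}} \longrightarrow W^{-}\otimes\mathcal{O}_X \longrightarrow 0
\]
one obtains a filtration on $\bigwedge^{i+1} M_{\mathcal{A}^{\otimes 2n}}$ whose graded pieces are $\bigwedge^{j} N \otimes \bigwedge^{i+1-j}(W^{-}\otimes\mathcal{O}_X)$, \emph{not} a filtration on $\bigwedge^{i+1} N$ with pieces built from $\bigwedge^{j} M_{\mathcal{A}^{\otimes 2n}}$. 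Thus Pareschi--Popa vanishing for $\bigwedge^{j} M_{\mathcal{A}^{\otimes 2n}}$ controls the cohomology of $\bigwedge^{i+1} M_{\mathcal{A}^{\otimes 2n}}$ in terms of that of $\bigwedge^{j} N$, which is the opposite of what you need. Concretely, from $0 \to \bigwedge^{i+1} N \to \bigwedge^{i+1} M_{\mathcal{A}^{\otimes 2n}} \to Q \to 0$ the vanishing of $H^1(\bigwedge^{i+1} M_{\mathcal{A}^{\otimes 2n}}\otimes\mathcal{A}^{\otimes 2ns})$ reduces $H^1(\bigwedge^{i+1} N\otimes\mathcal{A}^{\otimes 2ns})=0$ to the \emph{surjectivity} of $H^0(\bigwedge^{i+1} M_{\mathcal{A}^{\otimes 2n}}\otimes\mathcal{A}^{\otimes 2ns})\to H^0(Q\otimes\mathcal{A}^{\otimes 2ns})$, and $Q$ is filtered by $\bigwedge^{j} N\otimes\bigwedge^{i+1-j} W^{-}$ for $j\le i$: you are back to the same problem one step down, together with an $H^0$-surjectivity that is not supplied by the abelian-variety syzygy theorem. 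A numerical warning sign is that Pareschi--Popa for $\mathcal{A}^{\otimes 2n}$ would give $N_p$ already for $2n\ge p+3$, i.e.\ roughly $n\ge (p+3)/2$; if your reduction worked you would be proving something twice as strong as Theorem~A.

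The paper does not compare $N$ with $M_{\mathcal{A}^{\otimes 2n}}$ at all. Instead it works directly with the sub-linear-system kernel $N=M_{W_n}$ (where $W_n=H^0(X,\mathcal{A}^{\otimes 2n})^{+}$) and proves by induction on $p$ that $M_{W_n}^{\otimes p}\otimes\mathcal{A}^{\otimes m}$ satisfies I.T.\ with index $0$ for every $m\ge 2p+1$ (Proposition~4.3). The induction step uses not the Pareschi--Popa syzygy theorem, but Khaled's result (Proposition~3.1) that the \emph{incomplete} multiplication $W_n\otimes H^0(\mathcal{A}^{\otimes h}\otimes\alpha)\to H^0(\mathcal{A}^{\otimes 2n+h}\otimes\alpha)$ is surjective for all $[\alpha]\in\widehat{X}$ once $h\ge 3$. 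The remaining surjectivity of the map~\eqref{eq:*} then follows from an $M$-regularity argument (Lemma~4.1, Corollary~4.2) whose other key input is the new fact, proved in Section~3, that the same incomplete multiplication with $h=2$ is surjective for \emph{general} $[\alpha]$. In short, the Pareschi--Popa machinery enters only through the abstract multiplication criterion (Theorem~\ref{lem:kempf-pa2.2}), applied directly to tensor powers of $M_{W_n}$, and the substantive new ingredient is the analysis of $m_\alpha^{+}$ rather than any comparison with the full Lazarsfeld bundle.
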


Since it consists in an improvement of existing results on
the degrees of defining equations of Kummer varieties it is
worth to emphasize individually the case $p=1$ of the above
statement. Thanks to the geometric meaning of property $N_p^r$
(Section \ref{npekos}) one can deduce the following:
\begin{thmb}
 Let be $A$ a very ample line bundle on a Kummer variety $K_X$. Then the ideal of the image $\varphi_A(K_X)$ in
$\mathbb{P}(\coo{0}{X}{A})$ is generated by forms of degree at
most 4.
\end{thmb}
The improvement with
respect to what was classically known, thanks to the work
of Wirtinger, Andreotti--Mayer \cite{AM1967} and Khaled, is that this holds for
all Kummer varieties, and not only for the general ones.
It might be interesting to see whether this fact has some applications to moduli of abelian varieties.\par
Adding one hypothesis about the line bundle $A$ we can get a
somewhat better result improving the
work of Kempf and Khaled;
namely:
\begin{thmc}
 Let  $p$ and $r$ be two integers such that $p\geq1$,  $r\geq
0$ and
$\mathrm{char}(k)$ does not divide $p+1,\; p+2$. Let $A$ be an
ample line
bundle on a Kummer variety $\kummerx$, such that its pullback
$\pi_X^*A\simeq \A^{\otimes 2}$ with $\A$ an ample symmetric
invertible sheaf on $X$ which does not have a base divisor. Then the followings hold:
\begin{enumerate}
\item[(a)] 
$A^{\otimes n}$ satisfies property
$N_p$ for every $n\in\Z$ such that $n\geq p+1$;
\item[(b)] more generally $A^{\otimes n}$ satisfies property
$N_p^r$ for every $n$ such that $(r+1)n\geq p+1$.
\end{enumerate}
\end{thmc}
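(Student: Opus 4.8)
The plan is to reduce the statement to a Koszul-cohomology vanishing on $X$ that can be fed into the Pareschi--Popa machinery, using the extra hypothesis on $\A$ to save one tensor power. First, observe that (a) is exactly the case $r=0$ of (b), since for $r=0$ the inequality $(r+1)n\ge p+1$ reads $n\ge p+1$; so it suffices to prove (b). Throughout we may assume $\mathrm{char}(k)\neq 2$: exactly one of $p+1,p+2$ is even, and the hypotheses forbid $\mathrm{char}(k)$ from dividing it, so the invariant/anti-invariant decomposition under the involution $i_X$ is available and exact.

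By the geometric interpretation of property $N_p^r$ recalled in Section~\ref{npekos}, it is enough to show that $A^{\otimes n}$ is globally generated and projectively normal (the case $i=0$) and that the kernel bundle $M_{A^{\otimes n}}$, defined by $0\to M_{A^{\otimes n}}\to H^0(K_X,A^{\otimes n})\otimes\mathcal O_{K_X}\to A^{\otimes n}\to 0$, satisfies
\[
H^1\bigl(K_X,\ \wedge^{i+1}M_{A^{\otimes n}}\otimes A^{\otimes ns}\bigr)=0\qquad (1\le i\le p,\ s\ge r+1).
\]
Indeed these vanishings translate into $K_{i,j}(K_X,A^{\otimes n})=0$ for $i\le p$ and $j\ge r+2$, which is precisely property $N_p^r$.

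To verify them I pass to $X$ through $\pi_X$. Since $\mathrm{char}(k)\neq 2$, for every coherent sheaf $\mathcal F$ on $K_X$ one has $H^1(K_X,\mathcal F)=H^1(X,\pi_X^*\mathcal F)^{+}$, the $i_X$-invariant part; as $\pi_X^*A^{\otimes n}\simeq\A^{\otimes 2n}$, it is therefore enough to annihilate the invariant part of $H^1(X,\wedge^{i+1}(\pi_X^*M_{A^{\otimes n}})\otimes\A^{\otimes 2ns})$. Now $H^0(K_X,A^{\otimes n})=H^0(X,\A^{\otimes 2n})^{+}$ is only the invariant summand of $H^0(X,\A^{\otimes 2n})=H^0(X,\A^{\otimes 2n})^{+}\oplus H^0(X,\A^{\otimes 2n})^{-}$, so $\pi_X^*M_{A^{\otimes n}}$ is not $M_{\A^{\otimes 2n}}$ but, once global generation of $A^{\otimes n}$ by invariant sections is known, fits into
\[
0\to \pi_X^*M_{A^{\otimes n}}\to M_{\A^{\otimes 2n}}\to H^0(X,\A^{\otimes 2n})^{-}\otimes\mathcal O_X\to 0.
\]
Taking exterior powers of this sequence gives a filtration expressing $\wedge^{i+1}(\pi_X^*M_{A^{\otimes n}})$ through the bundles $\wedge^{k}M_{\A^{\otimes 2n}}$ with $k\le i+1$ tensored with trivial bundles, so that, inducting on $i$, the problem is reduced to the vanishing of $H^1(X,\wedge^{k}M_{\A^{\otimes 2n}}\otimes\A^{\otimes 2ns})$ for $k\le p+1$ and $s\ge r+1$.

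These last vanishings are exactly of the type controlled by the Pareschi--Popa theory of $M$-regularity applied to $\A^{\otimes 2n}$ and the twist $\A^{\otimes 2ns}$ on $X$. The numerology $(r+1)n\ge p+1$ is what makes the smallest allowed twist $\A^{\otimes 2n(r+1)}$ (at $s=r+1$) positive enough to kill $H^1$ of the $(p+1)$-st exterior power; here the hypothesis that $\A$ has no base divisor is decisive, since it makes $\A$ continuously globally generated and thus lets a single factor of $\A$ do the work that in the unconditional Theorem~A requires a factor of $\A^{\otimes 2}$, improving the bound from $p+2$ to $p+1$. I expect the main obstacle to be precisely this bookkeeping of the involution: one must first extract global generation and projective normality of $A^{\otimes n}$ on $K_X$ from the no-base-divisor hypothesis, and then, more delicately, ensure that it is the \emph{invariant} part of the Koszul cohomology that is annihilated, tracking the anti-invariant sections $H^0(X,\A^{\otimes 2n})^{-}$ through the exterior-power filtration rather than merely the total cohomology on $X$.
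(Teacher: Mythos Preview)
Your reduction via the short exact sequence
\[
0\to \pi_X^*M_{A^{\otimes n}}\to M_{\A^{\otimes 2n}}\to H^0(X,\A^{\otimes 2n})^{-}\otimes\mathcal O_X\to 0
\]
is where the argument breaks. The exterior-power filtration sits on the \emph{middle} term: $\wedge^{i+1}M_{\A^{\otimes 2n}}$ is filtered with graded pieces $\wedge^{k}(\pi_X^*M_{A^{\otimes n}})\otimes(\text{trivial})$, not the other way around. So vanishing of $H^1$ for the $\wedge^{k}M_{\A^{\otimes 2n}}$ does \emph{not} yield vanishing for $\wedge^{i+1}(\pi_X^*M_{A^{\otimes n}})$; to extract the subbundle you would need surjectivity of the relevant $H^0$-maps at every step of the filtration, which is precisely a Koszul-type statement you have not established. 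In effect you have reversed the direction of the implication, and the ``reduction'' to Pareschi's abelian-variety vanishing is not a reduction at all.

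The paper avoids this comparison entirely. It works directly with $M_{W_n}:=\pi_X^*M_{A^{\otimes n}}$ (the kernel of $W_n\otimes\mathcal O_X\to\A^{\otimes 2n}$ with $W_n=H^0(\A^{\otimes 2n})^+$) and with \emph{tensor} powers $M_{W_n}^{\otimes p}$, never passing through $M_{\A^{\otimes 2n}}$. Via Lemma~\ref{previous lemma}(b) the problem becomes surjectivity of multiplication by $W_n$, and Lemma~\ref{lem:redmreg}/Corollary~\ref{lem:reduction} reduce this to the $M$-regularity of $M_{W_n}^{\otimes p}\otimes\A^{\otimes 2(nh-1)}$. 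The decisive role of the no-base-divisor hypothesis---which in your sketch remains a slogan---is made precise in Section~\ref{section:mult}: it forces the locus in $\widehat X$ where the ``plus'' multiplication $m_\alpha^+\colon H^0(\A^{\otimes 2n})^+\otimes H^0(\A^{\otimes 2}\otimes\alpha)\to H^0(\A^{\otimes 2n+2}\otimes\alpha)$ fails to have codimension $\ge 2$ (Theorem~\ref{prop:bella}(2)), which is exactly what upgrades $M_{W_n}^{\otimes p}\otimes\A^{\otimes m}$ from I.T.(0) for $m\ge 2p+1$ (Proposition~\ref{prop: it0}) to $M$-regular for $m\ge 2p$ (the final Proposition), and this is the single power of $\A$ saved.
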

Again, it is worth of single out  the case $p=1$ of the above
Theorem, concerning the
equations
of the Kummer variety $\kummerx$.
\begin{thmd}
 Suppose that $\mathrm{char}(k)$ does not divide 2 or 3 and let
$A$ be an ample invertible sheaf on $\kummerx$ such that
$\pi_X^*A\simeq\A^{\otimes 2}$ with $\A$ without a base divisor.
Then:
\begin{itemize}
 \item[(a)] if $n\geq2$, then the ideal $\fas{I}_{\kummerx,
A^{\otimes n}}$ of the embedding $\varphi_{A^{\otimes n}}$
is generated by quadrics;
\item[(b)] $\fas{I}_{\kummerx, A}$ is generated by quadrics and
cubics.
\end{itemize}

\end{thmd}

The key point of the proofs of Theorems 2.A and 2.C will be
to reduce the problem on the Kummer variety $K_X$ to a
different problem on the abelian variety $X$. Namely we will show
that property $N_p^r$ on the Kummer is
implied by the surjectivity of a map of the type:
\begin{equation}\label{eq:intro1}\tag{*}
\bigoplus_{[\alpha]\in\w
U}\coo{0}{X}{\F\otimes\alpha}\otimes\coo{0}{X}{\fas{H}\otimes\alpha}\stackrel{m_\alpha}{\rightarrow}\coo{0}{X}
{ \F\otimes\fas{H}\otimes\alpha}
\end{equation}
where $\F$ and $\fas{H}$ are sheaves on $X$ and $\w
U$ is a non empty open
subset of $\w X$, the abelian variety dual to $X$, and $m_\alpha$ is just the multiplication of global
sections. Criteria for the surjectivity of such maps are implicit
in Kempf's work\index{Kempf G.} (\cite{Ke5,Ke1990}), for the case
$\F$ a vector bundle and $\fas{H}$ a line bundle (for an
explicit argument due to Lazarsfeld see \cite{Pa}). These results
were improved and extended to general coherent sheaves by
Pareschi--Popa in
\cite{PPII}.\\

This paper is organized in the following manner: in the next
Section we explain some background material such as the relationship between proprerty $N_p^r$ and the
cohomology of the Koszul complex and a useful
criterion for the surjectivity of a map of type
\eqref{eq:intro1}. In Section 3 we present some slightly
modified version of results of Sasaki and Khaled. The last
section is entirely devoted to the proof of the
main theorems.
\subsubsection*{Notations} Throughout the paper we will work over an algebraically closed field $k$ of characteristic different from 2; further restrictions on the field will be stated when needed. The word "variety" will mean a projective variety over $k$.
\begin{itemize}
 \item[$\diamond$] Given $\F$ a sheaf on a variety $Z$, then we
denote by $\mathrm{Bs}(\F)$ the locus in $Z$ where $\F$ is not
generated by its global sections, i. e. the locus of $z\in Z$ where the evaluation map $H^0(Z,\F)\otimes k(z)\rightarrow \F\otimes k(z)$ fails to be surjective.
%\item[$\diamond$] If not otherwise specificated, with capital
%print letters such as $L$, $A$, $F$ and $E$ we will denote
%sheaves on a Kummer variety $K_X$ while cursive letter such as
%$\LL$, $\A$, $\F$ and $\fas{E}$ will stand for sheaves on the
%correspective abelian variety $X$.
\item[$\diamond$] If $L$ is a line bundle on a variety
$Y$, $\varphi_L$ will be the rational map associated to $L$. The
symbol $\fas{I}_{Y,L}$ is the ideal of $\varphi_L(Y)$ in
$\mathbb{P}(H^0(Y,L))$.

%\item[$\diamond$] If $E$ and $E'$ are sheaves on $Y$ and $W$ and
%$W'$ are subvectorspaces of $H^0(Y;\;E)$ and $H^0(Y,\; E')$
%respectively, then $W\odot W'$ will be the image of $W\otimes
%W'$
%under the multiplication map.
\item[$\diamond$] If we have a product of varieties,
$X_1\times\cdots\times X_n$
then $p_i$ is the $i$-th projection. Given $\F_i$ a
sheaf on $X_i$, then $\F_1\boxtimes\cdots\boxtimes\F_n$
will stand for the sheaf $p_1^*\F_1\otimes\cdots\otimes
p_n^*\F_n$ on
$X_1\times\cdots\times X_n$.
\end{itemize}
Given an abelian variety $X$ of dimension $g$:
\begin{itemize}\item[$\diamond$] $i_X$ will be the inversion
morphism; $t_x$ will be the translation
by the point $x$;

\item[$\diamond$] The group law on the
abelian variety is denoted by $p_1+p_2$, while given an
integer $n$ the map $n_X:X\longrightarrow X$ will be the multiplication by $n$.
%Some with $d:X\times X\longrightarrow X$ we will denote the
%difference map $p_1-p_2$.
\end{itemize}

\section{Background Material}
\subsection[Property ${N_p^r}$ and Koszul
Cohomology]{Property $\mathbf{N_p^r}$ and Koszul
Cohomology}\label{npekos}

We begin by reviewing some well known relations between property
$N_p$, or more generally property $N_p^r$, 
and the surjectivity of certain multiplication maps of sections of vector bundles. Let $Z$ be a projective variety
and $L$ be an ample invertible sheaf on $Z$.
We know from \cite[Thm. 1.2]{GI} (see also \cite{Green} Thm. 1.2
or \cite[p. 511]{Laz}) that condition $N_p$
is equivalent to the exactness in the middle of the complex
\begin{equation}\label{eq:np1}
 \bigwedge^{p+1}H^0(L)\otimes H^0(L^{\otimes h})\rightarrow \bigwedge^{p}H^0(L)\otimes H^0(L^{\otimes
h+1})\rightarrow \bigwedge^{p-1}H^0(L)\otimes H^0(L^{\otimes h+2})
\end{equation}
for any $h\geq 1$. More  generally, condition $N_p^r$ is equivalent to exactness in the middle of
\eqref{eq:np1} for every $h\geq r+1$. Suppose that $L$ generated by
its global sections and consider the following exact sequence:
\begin{equation}
 \escorta{M_L}{H^0(L)\otimes\OO{Z}}{L}.\nomenclature[ch]{$M_L$}
{Kernel of the evaluation map of $L$}
\end{equation}
%Taking wedge product, for any $p$ one gets the following exact
%sequence
%\begin{equation}\label{eq:tre}
% \escorta{\bigwedge^{p+1}M_L}{\bigwedge^{p+1}H^0(L)\otimes\OO{Z}}{\bigwedge^p M_L\otimes L}.
%\end{equation}

%It follows (for details cfr. \cite{Laz} or \cite{Ein2011}) that
%property $N_p^r$ is implied by the surjectivity of 
%\begin{equation}\label{eq:quattro}
 %\bigwedge^{p+1}H^0(L)\otimes H^0(L^{\otimes
%h})\longrightarrow H^0\left(\bigwedge^pM_L\otimes L^{\otimes h+1}\right)
%\end{equation}
%for any $h\geq r+1$, where \eqref{eq:quattro} was obtained by twisting \eqref{eq:tre} by $L^{\otimes h}$ and
%taking cohomology. Thus from \eqref{eq:tre} it follows that if
It is well known (see, for example  \cite{Laz} or \cite{Ein2011})that property $N_p^r$ is implied by the surjectivity of 
\begin{equation}\label{eq:quattro}
 \bigwedge^{p+1}H^0(L)\otimes H^0(L^{\otimes
h})\longrightarrow H^0\left(\bigwedge^pM_L\otimes L^{\otimes h+1}\right).
\end{equation}
It follows that if
\begin{equation}
 \coo{1}{Z}{\bigwedge^{p+1}M_L\otimes L^{\otimes h}}=0
\end{equation}
for any $h\geq r+1$, then condition $N_p^r$ is satisfied. If
$\mathrm{char}(k)$ does not divide $p$,
$\bigwedge^p\fas{E}$ is a direct summands of $\sottosopra{\fas{E}}{\:}{\otimes p}$ for any vector bundle
$\fas{E}$.  This yields

\begin{lem}\label{previous lemma}
 Assume that $\mathrm{char}(k)$ does not divide $p+1$.
\begin{itemize}\item[(a)] If $\coo{1}{Z}{\sottosopra{M}{L}{\otimes p+1}\otimes L^{\otimes h}}=0$ for any
$h\geq r+1$, then $L$ satisfies $N_p^r$.
 \item[(b)] Let $W\subseteq\coo{0}{Z}{L}$ be a free sublinear
system and denote by $M_W$ the kernel of
the evaluation map $W\otimes\OO{Z}\rightarrow L$. Assume that
$\coo{1}{Z}{\sottosopra{M}{W}{\otimes
p}\otimes L^{\otimes h}}=0$. Then
$\coo{1}{Z}{\sottosopra{M}{W}{\otimes p+1}\otimes L^{\otimes
h}}=0$ if and only if the multiplication map
\[
W\otimes H^0(\sottosopra{M}{W}{\otimes p}\otimes L^{\otimes
h})\longrightarrow
H^0(\sottosopra{M}{W}{\otimes p}\otimes L^{\otimes h+1})
\]
is surjective.
\end{itemize}

\end{lem}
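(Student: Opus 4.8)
The plan is to prove Lemma~\ref{previous lemma} by exploiting the representation-theoretic splitting of tensor powers of a vector bundle together with the long exact sequence in cohomology attached to the defining sequence of $M_W$.

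\textbf{Part (a).} The statement just before the lemma records that property $N_p^r$ is implied by the vanishing of $\coo{1}{Z}{\bigwedge^{p+1}M_L\otimes L^{\otimes h}}$ for all $h\geq r+1$. Since $\mathrm{char}(k)$ does not divide $p+1$, the Schur functor machinery (equivalently, the idempotent $\frac{1}{(p+1)!}\sum_{\sigma}\mathrm{sgn}(\sigma)\sigma$ in the group algebra of the symmetric group $\mathfrak{S}_{p+1}$, which is well defined exactly because $(p+1)!$ is invertible in $k$) realizes $\bigwedge^{p+1}M_L$ as a direct summand of the full tensor power $\sottosopra{M}{L}{\otimes p+1}$. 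First I would note that cohomology is additive on direct sums, so a direct summand of a sheaf with vanishing $H^1$ also has vanishing $H^1$. Hence if $\coo{1}{Z}{\sottosopra{M}{L}{\otimes p+1}\otimes L^{\otimes h}}=0$ for all $h\geq r+1$, then $\coo{1}{Z}{\bigwedge^{p+1}M_L\otimes L^{\otimes h}}=0$ for the same range of $h$, and the cited implication gives $N_p^r$ at once.

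\textbf{Part (b).} Here the idea is to tensor the defining short exact sequence of $M_W$, namely $0\to M_W\to W\otimes\OO{Z}\to L\to 0$, by the vector bundle $\sottosopra{M}{W}{\otimes p}\otimes L^{\otimes h}$. This yields the short exact sequence
\[
0\to \sottosopra{M}{W}{\otimes p+1}\otimes L^{\otimes h}\to W\otimes \sottosopra{M}{W}{\otimes p}\otimes L^{\otimes h}\to \sottosopra{M}{W}{\otimes p}\otimes L^{\otimes h+1}\to 0,
\]
whose associated long exact sequence in cohomology reads
\[
W\otimes H^0(\sottosopra{M}{W}{\otimes p}\otimes L^{\otimes h})\to H^0(\sottosopra{M}{W}{\otimes p}\otimes L^{\otimes h+1})\to \coo{1}{Z}{\sottosopra{M}{W}{\otimes p+1}\otimes L^{\otimes h}}\to W\otimes \coo{1}{Z}{\sottosopra{M}{W}{\otimes p}\otimes L^{\otimes h}}.
\]
By hypothesis the rightmost term $W\otimes \coo{1}{Z}{\sottosopra{M}{W}{\otimes p}\otimes L^{\otimes h}}$ vanishes, so the connecting map identifies $\coo{1}{Z}{\sottosopra{M}{W}{\otimes p+1}\otimes L^{\otimes h}}$ with the cokernel of the first map, which is precisely the multiplication map in the statement. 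Therefore that $H^1$ vanishes if and only if the multiplication map is surjective, which is the claimed equivalence.

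The only genuine point requiring care is that the first arrow in the long exact sequence really is the multiplication of sections. I would verify that tensoring the evaluation map $W\otimes\OO{Z}\to L$ by $\sottosopra{M}{W}{\otimes p}\otimes L^{\otimes h}$ and taking global sections reproduces, after the canonical identification $H^0(W\otimes \mathcal{F})=W\otimes H^0(\mathcal{F})$ (valid since $W$ is a finite-dimensional $k$-vector space), the map sending $w\otimes s$ to $\mathrm{ev}(w)\cdot s$; this is exactly the multiplication $W\otimes H^0(\sottosopra{M}{W}{\otimes p}\otimes L^{\otimes h})\to H^0(\sottosopra{M}{W}{\otimes p}\otimes L^{\otimes h+1})$. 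I do not expect any serious obstacle here: both parts are standard homological bookkeeping, and the single hypothesis that $\mathrm{char}(k)\nmid p+1$ is used only to guarantee the splitting in part (a).
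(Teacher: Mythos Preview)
Your argument is correct and follows exactly the paper's approach: part (a) is deduced from the splitting of $\bigwedge^{p+1}M_L$ off of $M_L^{\otimes p+1}$ (which the paper states just before the lemma and calls ``straightforward''), and part (b) is obtained from the long exact sequence associated to the short exact sequence $0\to M_W^{\otimes p+1}\otimes L^{\otimes h}\to W\otimes M_W^{\otimes p}\otimes L^{\otimes h}\to M_W^{\otimes p}\otimes L^{\otimes h+1}\to 0$, precisely as the paper does. One small slip: your parenthetical justification for the splitting invokes invertibility of $(p+1)!$, which is strictly stronger than the stated hypothesis that $\mathrm{char}(k)\nmid p+1$; the paper simply asserts the splitting under the weaker hypothesis without further comment, so you should either cite that assertion directly or refine the justification.
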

\begin{proof}
 The proof of (a) is straightforward, while (b) follows from the following exact sequence:
\[
\escorta{\sottosopra{M}{W}{\otimes p+1}\otimes L^{\otimes
h}}{W\otimes\sottosopra{M}{W}{\otimes
p}\otimes L^{\otimes h}}{\sottosopra{M}{W}{\otimes p}\otimes
L^{\otimes h+1}}.
\]
\end{proof}

\subsubsection{Property $\mathbf{N_p^r}$ for
Small $\mathbf{p}$'s'}\label{rmk:geometric}
%It is well known that for small $p$ property $N_p^r$ is full  of
%geometric meanings. 
By definition , if a
variety $Z$ is embedded in a projective space by a very ample
line bundle $L$ satisfying property $N_0^r$,
then the variety $Z$ is $h$-normal for every $h\geq r$.  In particular
property $N_0$ is equivalent to projective normality.\par
Although this is a standard fact, it is perhaps worth spelling out the meaning of property $N_1^r$, thus providing a direct proof of Particular Case B and D.
\begin{prop}\label{prop:equations} If $L$ is a very ample line
bundle on an algebraic variety $Z$ satisfying
$N_1^r$, then the homogeneous ideal of $Z$ is generated by
homogeneous elements of degree at most $r+2$.
\end{prop}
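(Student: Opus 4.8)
The plan is to translate the hypothesis $N_1^r$ into the middle-exactness of the Koszul-type complex \eqref{eq:np1} with $p=1$, and then to run a diagram chase against the (always exact) Koszul complex of the polynomial ring $S_L$. Write $V := H^0(Z,L)$, so that $S_L=\bigoplus_n\mathrm{Sym}^n V$ is the homogeneous coordinate ring of the ambient space $\mathbb{P}(V)$ into which $L$ embeds $Z$. By definition the homogeneous ideal $\fas{I}_{Z,L}$ is graded with $n$-th piece $(\fas{I}_{Z,L})_n=\ker(\mathrm{Sym}^n V\to H^0(L^{\otimes n}))$, the kernel of the natural multiplication into the section ring. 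Saying that $\fas{I}_{Z,L}$ is generated in degrees $\le r+2$ is equivalent to the equalities $(\fas{I}_{Z,L})_n=V\cdot(\fas{I}_{Z,L})_{n-1}$ for every $n\ge r+3$; the inclusion $\supseteq$ is automatic since $\fas{I}_{Z,L}$ is an ideal, so I only need $(\fas{I}_{Z,L})_n\subseteq V\cdot(\fas{I}_{Z,L})_{n-1}$.

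Next I would record the two ingredients supplied by the hypothesis. First, since $N_1^r$ implies $N_0^r$, and $N_0^r$ forces $Z$ to be $h$-normal for all $h\ge r$, the map $\mathrm{Sym}^h V\to H^0(L^{\otimes h})$ is surjective for every $h\ge r$. Second, $N_1^r$ is equivalent to exactness in the middle of
\[
\bigwedge^2 V\otimes H^0(L^{\otimes h})\xrightarrow{d_2} V\otimes H^0(L^{\otimes h+1})\xrightarrow{d_1} H^0(L^{\otimes h+2}),
\]
for every $h\ge r+1$, where $d_1$ is multiplication and $d_2$ is the Koszul differential. I compare this with the analogous stretch of the Koszul complex of $S_L$, namely $\bigwedge^2 V\otimes\mathrm{Sym}^h V\to V\otimes\mathrm{Sym}^{h+1}V\xrightarrow{m}\mathrm{Sym}^{h+2}V$, which is exact in the middle and surjective on the right for all $h$ and which maps to the previous complex through the surjections $\mathrm{Sym}^\bullet V\to H^0(L^{\otimes\bullet})$, forming a commutative ladder.

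Then comes the chase, carried out for a fixed $n=h+2\ge r+3$, i.e. $h\ge r+1$. Given $F\in(\fas{I}_{Z,L})_{h+2}$, I lift it along the surjection $m$ to some $\tilde F\in V\otimes\mathrm{Sym}^{h+1}V$; its image in $V\otimes H^0(L^{\otimes h+1})$ lies in $\ker d_1$, because $F$ dies in $H^0(L^{\otimes h+2})$. By middle-exactness of the bottom row (here I use $h\ge r+1$) this image equals $d_2(w)$ for some $w\in\bigwedge^2 V\otimes H^0(L^{\otimes h})$, and using $h$-normality in degree $h\ge r$ I lift $w$ to $\tilde w\in\bigwedge^2 V\otimes\mathrm{Sym}^h V$. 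Replacing $\tilde F$ by $\tilde F-d_2(\tilde w)$ leaves its image under $m$ unchanged, since consecutive Koszul differentials compose to zero, while now it maps to zero in $V\otimes H^0(L^{\otimes h+1})$; as $V\otimes(-)$ is exact this forces $\tilde F-d_2(\tilde w)\in V\otimes(\fas{I}_{Z,L})_{h+1}$. Applying $m$ then yields $F\in V\cdot(\fas{I}_{Z,L})_{h+1}$, which is exactly what is needed.

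The argument is essentially a formal consequence of the equivalences recalled in Section \ref{npekos}, so I do not expect a deep obstacle; the point requiring care is the bookkeeping that makes the two inputs available \emph{simultaneously} in the correct range. The middle-exactness coming from $N_1^r$ holds only for $h\ge r+1$, and it is precisely this threshold — together with the milder $h\ge r$ needed to lift $w$ via $h$-normality — that produces the sharp degree bound $r+2$ on the generators rather than something larger. Keeping these two thresholds aligned, and checking that the flatness of $V\otimes(-)$ lets the kernel be computed factor-wise, is the only subtlety.
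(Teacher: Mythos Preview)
Your proof is correct and follows essentially the same route as the paper. Both arguments compare the Koszul complex of $S_L$ with the section-ring complex on $Z$, and both feed in exactly the two inputs you isolate: $h$-normality from $N_0^r$ to lift from $H^0(L^{\otimes h})$ to $\mathrm{Sym}^h V$, and the middle-exactness from $N_1^r$ (equivalently, surjectivity of $\bigwedge^2 V\otimes H^0(L^{\otimes h})\to H^0(M_L\otimes L^{\otimes h+1})=\ker d_1$) in the range $h\ge r+1$. The only cosmetic difference is packaging: the paper sets up the diagram with rows $0\to I_k\to\mathrm{Sym}^k V\to (R_L)_k\to 0$ and invokes the Snake Lemma to reduce surjectivity of $I_k\otimes V\to I_{k+1}$ to surjectivity of $\mathrm{Sym}^{k-1}V\otimes\bigwedge^2 V\to H^0(M_L\otimes L^{\otimes k})$, whereas you do the equivalent element-by-element chase directly.
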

\begin{proof}
Denote by $V$ the vector space $\coo{0}{Z}{L}$ and let $\mathrm{S}^k V$ be the component of
degree
$k$ of the symmetric algebra of $V$, $S$. Consider furthermore
the two $S$-modules
\[I=\bigoplus\coo{0}{\mathbb{P}(V)}{\fas{I}_{Z,L}(k)}\] and $R_L$. Look at the following commutative
diagram where the middle column is  given by the Koszul complex. \[
  \xymatrix{%& && && & \
%& && && & \\
& &&\mathrm{S}^{k-1}V\otimes\bigwedge^2V\ar[rr]^{(3)}\ar[d] &&H^0(Z,\;M_L\otimes L^{\otimes k})\ar[d] & \\
0\ar[r]& I_k\otimes V\ar[d]_{(1)}\ar[rr] &&\mathrm{S}^kV\otimes V\ar[d]_{(2)}\ar[rr] && \{R_L\}_k\otimes
V\ar[r]\ar[d] & 0\\
0\ar[r]& I_{k+1}\ar[rr]&&\mathrm{S}^{k+1}V\ar[rr] &&\{R_L\}_{k+1}\ar[r] & 0\\}
 \]
Our aim is to see that the map (1) is surjective for every $k\geq r+2$. Suppose that $L$ satisfies property
$N_1^r$, then in particular property $N_0^r$ holds for $L$ and
the second and third row are exact for every
$k\geq r+1$. Since (2) is surjective, by the Snake Lemma, for every $k\geq r+1$ the surjectivity of (1) is
implied by the surjectivity of (3) for every $k\geq r+2$. Now we can factor (3) in the following way:
\[\xymatrix{\mathrm{S}^{k-1}V\otimes\bigwedge^2V\ar[rr]^{(3)}\ar[dr]_g &&H^0(Z,\;M_L\otimes L^{\otimes
k})\\
&\coo{0}{Z}{L^{\otimes k-1}}\otimes\bigwedge^2V \ar[ur]_f&}
\]
where $g$ is the  canonical mapping $\coo{0}{\mathbb{P}(V)}{\fas{O}_{\mathbb{P}}(k-1)}\longrightarrow
\coo{0}{Z}{L^{\otimes k-1}}$ and $f$ is the map in \eqref{eq:quattro}.
For every $k\geq r+2$, $g$ is surjective  because $L$ satisfies
$N_0^r$, while the surjectivity of $f$
is equivalent to property $N_1^r$; hence (3) is surjective.
\end{proof}
\subsection[$M$-regular Sheaves and Multiplication
Maps]{$\mathbf M$-regular Sheaves and Multiplication Maps}
\subsection{A review on $M$-regularity}

In what follows we briefly recall some notions about
$M$-regular sheaves. We refer to
\cite{PPI,PPII} for further
details.\par
Given an abelian variety $X$ and a coherent sheaf $\F$ on $X$, we introduce the \emph{cohomological support loci} of $\F$: 
$$V^i(\F)
:=\{\alpha\:|\:h^i(\F\otimes\alpha)\neq 0\}\subseteq\w X$$
It is said that a sheaf $\F$ satisfies \emph{index theorem} with index $j$ (or
equivalently it is said to
be I.T.$(j)$ if for
every $i\neq j$ the loci $V^i(\F)$ are empty.
\begin{dfn}
 A sheaf $\fascio$ over an abelian variety $X$ is said to be \emph{Mukai-regular} (or simply
$M$-regular) if, for every $i>0$, $\mathrm{codim}\:V^i(\F)\geq i+1$.
\end{dfn}

\subsubsection{Application to multiplication maps}
 $M$-regular sheaves and $M$-regularity
theory, introduced by Pareschi-Popa in a long series of paper ranging from 2002 to 2009, are crucial to our
purpose thank
to their
application in determining whether a map of the form
\begin{equation}\label{eq:intro}
\bigoplus_{[\alpha]\in U}\multalpha{X}{\F}{\fas{H}},
\end{equation}
 with $\F$ and $\fas{H}$ sheaves on an abelian variety $X$ and
$U\subseteq \w X$ an open set, is surjective. We will list below
all the results of such kind
that we will be using troughout the paper. The first one is an
extension of a theorem that
had
already
appeared in the work of Kempf\index{Kempf G.},
Mumford\index{Mumford D.} and Lazarsfeld\index{Lazarsfeld R.}.
\begin{thm}[\cite{PPI}, Theorem 2.5]\label{lem:kempf-pa2.2}\label{cor:m-reg}
Let $\F$ and $\fas{H}$ be sheaves on $X$ such that $\F$ is
$M$-regular\index{M@$M$-regular} and $\fas{H}$ is locally free
satisfying
I.T. with index 0\index{I.T.}. Then 
\eqref{eq:intro}
is surjective for any non empty Zariski open set
$U\subseteq\widehat X$.\par
In particular, if $\F$ and $\fas{H}$ are as above, then there exists $N$ a positive integer such that for the
general $[\alpha_1],\ldots,[\alpha_N]\in\w X$ the map
\[
\xymatrix{
\displaystyle{\bigoplus_{k=1}^N}\cox{\F\otimes\alpha_k}
\otimes\cox{\fas{H}\otimes\alpha_k^\vee}\ar[rr]^{\hspace*{2cm}m_{
\alpha_k } } & &\cox{\F\otimes\fas{H}}
}
\]
is surjective.
\end{thm}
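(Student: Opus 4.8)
The plan is to transport the problem to the dual abelian variety $\widehat X$ by means of the Fourier--Mukai transform, and then to use the two hypotheses in complementary ways: the condition I.T.$(0)$ will control $\fas{H}$ exactly, while $M$-regularity of $\F$ enters only through the fact that $M$-regular sheaves are \emph{continuously globally generated}. I first observe that the ``in particular'' assertion is a purely formal consequence of the first statement: once \eqref{eq:intro} is surjective for every non-empty open $U$, a Noetherianity and generic-flatness argument shows that finitely many general summands already suffice, producing the integer $N$ and the general points $[\alpha_1],\dots,[\alpha_N]$. So the entire content lies in the surjectivity over an arbitrary non-empty open $U\subseteq\widehat X$.

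Next I would set up the transform. Fix a normalized Poincaré bundle $\mathcal P$ on $X\times\widehat X$ with projections $p,q$. Since $\fas{H}$ is locally free and satisfies I.T.$(0)$, the complex $\mathbf{R}q_*(p^*\fas{H}\otimes\mathcal P^{-1})$ collapses to a single vector bundle $\widehat{\fas{H}}$ on $\widehat X$, whose formation commutes with base change and whose fibre at $[\alpha]$ is $H^0(X,\fas{H}\otimes\alpha^\vee)$. The same construction applied to $\F$ yields a coherent sheaf $\widehat{\F}$ with generic fibre $H^0(X,\F\otimes\alpha)$; here $M$-regularity is precisely the statement that the loci $V^i(\F)$ governing the failure of base change have codimension $\geq i+1$, so that $\widehat{\F}$ is locally free with the expected fibre over a dense open subset of $\widehat X$. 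The multiplication maps $m_\alpha$ of \eqref{eq:intro} are then the fibres of a single morphism of sheaves $\mu\colon\widehat{\F}\otimes\widehat{\fas{H}}\to H^0(X,\F\otimes\fas{H})\otimes\mathcal O_{\widehat X}$, the target being the trivial bundle obtained by pushing forward $p^*(\F\otimes\fas{H})$. In these terms, surjectivity of \eqref{eq:intro} is equivalent to the assertion that no nonzero constant covector on $H^0(X,\F\otimes\fas{H})$ annihilates the images of all the $\mu_{[\alpha]}$ with $[\alpha]\in U$; since $U$ is dense, this is really a statement about the image subsheaf of $\mu$ on all of $\widehat X$.

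The heart of the argument is to rule out such an annihilating covector, and this is where $M$-regularity is used essentially, through continuous global generation: for every non-empty open $V\subseteq\widehat X$ the twisted evaluation $\bigoplus_{[\beta]\in V}H^0(X,\F\otimes\beta)\otimes\beta^\vee\to\F$ is surjective as a map of sheaves on $X$. Tensoring this generation statement by $\fas{H}$ and pushing forward, the vector bundle $\widehat{\fas{H}}$ coming from I.T.$(0)$ supplies, for each relevant $\beta$, enough sections of $\fas{H}\otimes\beta^\vee$ to promote the pointwise generation of $\F\otimes\fas{H}$ to a statement at the level of $H^0$: a covector killing every $\mathrm{Im}(\mu_{[\alpha]})$ would then have to annihilate a generating family of $\F\otimes\fas{H}$ and hence vanish. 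Concretely one reduces, in Kempf's style, to a cohomological vanishing obtained from an evaluation sequence, which the generic vanishing encoded by $M$-regularity forces once the sum over $U$ is taken into account.

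I expect the main obstacle to be exactly this last upgrade, from generic, pointwise information to honest surjectivity on global sections. The difficulty is structural: I.T.$(0)$ alone does \emph{not} make $\fas{H}$ globally generated---a principal polarization is I.T.$(0)$ yet has a one-dimensional space of sections---so one cannot simply invoke global generation of $\fas{H}$ and reduce to a single Kempf-type vanishing for one $\alpha$. Instead the I.T.$(0)$ hypothesis must be fed in through the Fourier--Mukai transform, where it guarantees that $\widehat{\fas{H}}$ is a genuine vector bundle with base change, and the averaging over the whole open set $U$ is what compensates for the possible failure of $\fas{H}$ to be generated. Making this interaction precise---ensuring that the continuous global generation of $\F$ and the vector-bundle structure of $\widehat{\fas{H}}$ together kill the cokernel of $\mu$, rather than merely rendering it generically trivial---is the crux of the proof.
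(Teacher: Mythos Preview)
The paper does not give its own proof of this statement: it is quoted as Theorem~2.5 of \cite{PPI} and used as a black box throughout Section~\ref{section:mreg}. So there is nothing in the present paper to compare your argument against; the relevant benchmark is Pareschi--Popa's original proof.

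Your outline is essentially the \cite{PPI} strategy. You correctly separate the two hypotheses---I.T.$(0)$ for $\fas{H}$ producing a genuine vector bundle $\widehat{\fas{H}}$ under the transform, and $M$-regularity of $\F$ entering only via continuous global generation---and you are right that the ``in particular'' clause is a formal Noetherian consequence of the first assertion. The step you flag as the crux (promoting the sheaf-level surjection $\bigoplus_{\alpha\in U} H^0(\F\otimes\alpha)\otimes\fas{H}\otimes\alpha^{-1}\twoheadrightarrow\F\otimes\fas{H}$ to a surjection on global sections) is exactly the point where \cite{PPI} does the work, by writing the evaluation map on $X\times\widehat X$, pushing forward along the projection to $\widehat X$, and using I.T.$(0)$ to kill the relevant higher direct image so that $H^0$ preserves surjectivity.

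What you have written, however, is a plan rather than a proof: you name the obstacle in your last paragraph and then stop, saying that ``making this interaction precise\ldots is the crux.'' Your sentence about reducing ``in Kempf's style, to a cohomological vanishing obtained from an evaluation sequence'' is the right direction, but until you actually write down that sequence on $X\times\widehat X$, identify its pushforward, and verify the vanishing using the $M$-regularity codimension bounds, the argument is incomplete. Nothing you have said is wrong, and the architecture matches \cite{PPI}; it is simply unfinished at the one nontrivial step.
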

We conclude this paragraph by presenting two results on
multiplication maps of sections we will be needing afterward.
\begin{prop}\label{prop:blp2}
 Let $\A$ be an ample line bundle on an abelian variety $X$. The map
\begin{equation}\label{eq:blp2}
m:\cox{\A^{\otimes
2}}\otimes\cox{\A^{\otimes
2}\otimes\alpha}\longrightarrow\cox{\A^{\otimes
4}\otimes\alpha}.\end{equation}
is surjective for the general $[\alpha]\in\w X$. If furthermore
$\A$ does not have a base divisor, then the locus
$Z\subseteq\Pic^0(X)$ in which it fails to be surjective has
codimension at least 2.
\begin{proof}
 The first part of the statement is classical, for a reference
see, for example \cite[Proposition 7.2.2]{BL}. Pareschi--Popa
provided another proof of this, together with  a proof  of the
second statement of the Proposition, using their Fourier--Mukai
based methods (see \cite[Proposition 5.2, Proposition 5.6, and
Theorem 5.8]{PPII}).
\end{proof}

\end{prop}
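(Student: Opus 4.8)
The plan is to turn the surjectivity of $m$ into a single cohomological vanishing, and then to read both statements off the support loci of one vector bundle. Since $\A^{\otimes 2}$ is globally generated, we may form the kernel bundle $M_{\A^{\otimes 2}}$ sitting in the exact sequence
\[
0\longrightarrow M_{\A^{\otimes 2}}\longrightarrow \coo{0}{X}{\A^{\otimes 2}}\otimes\OO{X}\longrightarrow\A^{\otimes 2}\longrightarrow 0 .
\]
Tensoring by $\A^{\otimes 2}\otimes\alpha$ and taking cohomology, the induced map on $H^0$ is exactly the multiplication map \eqref{eq:blp2}, and because $\A^{\otimes 2}\otimes\alpha$ is ample, hence I.T.$(0)$, we have $\coo{1}{X}{\A^{\otimes 2}\otimes\alpha}=0$. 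The long exact sequence then identifies the cokernel of $m$ with $\coo{1}{X}{M_{\A^{\otimes 2}}\otimes\A^{\otimes 2}\otimes\alpha}$. Writing $E:=M_{\A^{\otimes 2}}\otimes\A^{\otimes 2}$, the failure locus of the proposition is precisely the cohomological support locus $Z=V^1(E)$.

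Next I would dispose of the higher support loci for free. From the same twisted sequence, for $i\geq 2$ the group $\coo{i}{X}{E\otimes\alpha}$ sits in the long exact sequence between $\coo{i-1}{X}{\A^{\otimes 4}\otimes\alpha}$ and $\coo{0}{X}{\A^{\otimes 2}}\otimes\coo{i}{X}{\A^{\otimes 2}\otimes\alpha}$, and both of these vanish since $\A^{\otimes 4}\otimes\alpha$ and $\A^{\otimes 2}\otimes\alpha$ are I.T.$(0)$. Hence $V^i(E)=\emptyset$ for all $i\geq 2$, so that $E$ is $M$-regular if and only if $\mathrm{codim}\,V^1(E)\geq 2$, while $E$ has generic vanishing if and only if $V^1(E)\neq\w X$. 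With this dictionary the two assertions become transparent: part one says $V^1(E)\neq\w X$, and part two says $\mathrm{codim}\,V^1(E)\geq 2$, that is, that $E$ is $M$-regular.

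For the first part it is enough to know that $E$ is a generic vanishing sheaf, equivalently that \eqref{eq:blp2} is surjective for general $\alpha$; this is classical and I would simply cite \cite[Proposition 7.2.2]{BL}, noting that in the language of this paper it reflects the fact that $\A^{\otimes 2}$ is an $M$-regular, globally generated, I.T.$(0)$ sheaf. Either way one obtains $V^1(E)\subsetneq\w X$, which is exactly part one.

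The real work, and the main obstacle, is the second assertion: promoting $V^1(E)\neq\w X$ to $\mathrm{codim}\,V^1(E)\geq 2$. Generic vanishing does not suffice here, since one must exclude a divisorial component of $V^1(E)$, and the codimension-two bound genuinely fails without a hypothesis on $\A$. The condition that $\A$ carry no base divisor is precisely what is needed, and the way to exploit it is through the Fourier--Mukai transform: the codimension of $V^1(E)$ is controlled by the support and torsion of the transform of $E$, and the absence of a base divisor for $\A$ is exactly what prevents that transform from acquiring a codimension-one degeneracy. Making this precise is the content of \cite[Proposition 5.2, Proposition 5.6 and Theorem 5.8]{PPII}, and in practice I would invoke their $M$-regularity criteria for $E=M_{\A^{\otimes 2}}\otimes\A^{\otimes 2}$ rather than reproduce the Fourier--Mukai analysis from scratch.
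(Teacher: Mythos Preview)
Your proposal is correct and takes essentially the same approach as the paper: both reduce to citing \cite[Proposition 7.2.2]{BL} for the first part and \cite[Proposition 5.2, Proposition 5.6, Theorem 5.8]{PPII} for the second. The paper's proof is in fact nothing more than these two citations; your version adds the explicit translation via the kernel bundle $E=M_{\A^{\otimes 2}}\otimes\A^{\otimes 2}$ and the identification of the failure locus with $V^1(E)$, which is precisely the framework underlying the Pareschi--Popa results you invoke, so the extra detail is helpful but not a departure.
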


\section{Multiplication Maps on Abelian
Varieties}\label{section:mult}
 Let $\fas{A}$ be
an ample symmetric invertible sheaf on the abelian variety $X$
and denote by $\psi_{\A}:\A\rightarrow i^*_X\A$ its \emph{normalized isomorphism}
(see \cite[p. 304]{Mum-eq}). The $\G$ action on $X$ induced by
the involution
$i_X:X\longrightarrow
X$\nomenclature{$\mathbf{-1}_X$}{Involution on $X$ given by $x\mapsto -x$}
induces trough $\psi_\A$ a lifting of the action on $\A$. The
composition
\[\xymatrix{H^0(X,\:\A)\ar[rr]^{i_X^*}&&H^0(X,\:i_X^*\A)\ar[rr]^{
i_X^*(\psi_\A) }&&H^0(X,\:\A)}\]  is
denoted by $[-1]_\A$. We let
\[H^0(X,\:\A)^\pm=\{s\in H^0(X,\:\A)\text{ such that
}[-1]_\A s=\pm s\}\nomenclature{$\coo{0}{X}{\fas{\A}}^\pm$}{the
eigenspace
associated to the eigenvalue $\pm 1$ of the involution $[-1]$}\]
If $\fas{A}$ is \emph{totally symmetric}
(see \cite[p. 304]{Mum-eq}, e.g. if $\A$ is an even power of a
symmetric line bundle), then there exists\footnote{This is a standard fact. A proof could be found in \cite[Prop. 2.1.7]{MyThesis}} a line bundle $A$ on
the
Kummer variety $\kummerx$ such that $\pi_X^*A\simeq\fas{A}$ and
one can identify $\coo{0}{\kummerx}{A}$ with
$\coo{0}{X}{\fas{A}}^+$. Conversely if $A$ is an ample line
bundle on $K_X$, then there exists $\A$, ample and symmetric on $X$,
such that $\pi_X^*A\simeq\A^{\otimes 2}$.\par
\indent A well known result by Khaled states that
\begin{prop}[\cite{Khaled2}]\label{khaledfr:1.4.4}
 Suppose that $k=2n$ is an even positive integer. Thus
$\fas{A}^{\otimes k}$ is totally symmetric. Then 
\[
m_\alpha^+:H^0(X,\fas{A}^{\otimes
k})^+\otimes\coo{0}{X}{\fas{A}^{\otimes
h}\otimes\alpha}\longrightarrow\coo{0}{X}{ \fas{A}^{\otimes k+h
}
\otimes\alpha }
\]
is surjective for 
$n\geq 1$, every $h\geq 3$, and for every $\alpha\in\Pic^0(X)$
\end{prop}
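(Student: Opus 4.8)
The plan is to derive the surjectivity of $m_\alpha^+$ from that of the full multiplication map
\[ m_\alpha\colon H^0(X,\fas{A}^{\otimes 2n})\otimes H^0(X,\fas{A}^{\otimes h}\otimes\alpha)\longrightarrow H^0(X,\fas{A}^{\otimes 2n+h}\otimes\alpha), \]
which for $2n\geq 2$ and $h\geq 3$ is surjective for every $\alpha\in\widehat X$ by the classical multiplication theorems for abelian varieties (see, e.g., \cite{Ko1976,Sasaki1}). Write $V^{\pm}=H^0(X,\fas{A}^{\otimes 2n})^{\pm}$ for the eigenspaces of $[-1]_{\fas{A}^{\otimes 2n}}$ and $W^{\pm}$ for the images of $V^{\pm}\otimes H^0(X,\fas{A}^{\otimes h}\otimes\alpha)$ under multiplication, so that $W^{+}+W^{-}=\mathrm{Im}\,m_\alpha$. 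Granting the surjectivity of $m_\alpha$, the surjectivity of $m_\alpha^+$ is then equivalent to the single inclusion $W^{-}\subseteq W^{+}$; in words, the statement asserts exactly that every product of an odd section of $\fas{A}^{\otimes 2n}$ with a section of $\fas{A}^{\otimes h}\otimes\alpha$ is already reached using even sections. The bound $h\geq 3$ is sharp already at this first step: Proposition \ref{prop:blp2} shows that for $h=2$ the full map $m_\alpha$ can fail to be surjective on a locus of positive codimension in $\widehat X$, so $h\geq 3$ is precisely what guarantees surjectivity of $m_\alpha$ for all $\alpha$.

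To connect this with the kernel-bundle formalism of Subsection \ref{npekos} and to see that nothing else is hidden, I would phrase the reduction through the bundles $M_{V^{+}}=\ker(V^{+}\otimes\mathcal{O}_X\to\fas{A}^{\otimes 2n})$ and $M_{\fas{A}^{\otimes 2n}}$. Here $V^{+}$ is base-point free, since $\fas{A}^{\otimes 2n}$ is totally symmetric and $V^{+}$ is the pullback of the system $H^0(K_X,A)$ defining the Kummer map; comparing the two evaluation sequences via the snake lemma yields
\[ 0\longrightarrow M_{V^{+}}\longrightarrow M_{\fas{A}^{\otimes 2n}}\longrightarrow V^{-}\otimes\mathcal{O}_X\longrightarrow 0. \]
Twisting by $\fas{A}^{\otimes h}\otimes\alpha$ and using $H^{i}(X,\fas{A}^{\otimes h}\otimes\alpha)=0$ for $i>0$, surjectivity of $m_\alpha^+$ becomes the vanishing $H^{1}(X,M_{V^{+}}\otimes\fas{A}^{\otimes h}\otimes\alpha)=0$, which follows from $H^{1}(X,M_{\fas{A}^{\otimes 2n}}\otimes\fas{A}^{\otimes h}\otimes\alpha)=0$ (equivalent to the surjectivity of $m_\alpha$) together with the surjectivity of the restriction $H^{0}(X,M_{\fas{A}^{\otimes 2n}}\otimes\fas{A}^{\otimes h}\otimes\alpha)\to V^{-}\otimes H^{0}(X,\fas{A}^{\otimes h}\otimes\alpha)$. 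A direct inspection identifies this last surjectivity with the inclusion $W^{-}\subseteq W^{+}$ once more, confirming that the homological input is entirely formal and that the whole content of the Proposition sits in that inclusion.

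The inclusion $W^{-}\subseteq W^{+}$ is where the genuine work lies, and I expect it to be the main obstacle. No formal argument with the involution alone can succeed: $[-1]$ sends $\fas{A}^{\otimes h}\otimes\alpha$ to $\fas{A}^{\otimes h}\otimes\alpha^{-1}$, hence is not a symmetry of the target unless $\alpha\in X[2]$, while translation by a $2$-torsion point commutes with $[-1]$ and so preserves each eigenspace instead of mixing them. I would therefore follow the approach of Kempf and Khaled through the theta (Heisenberg) group: the three spaces are irreducible representations of their respective theta groups, the map $m_\alpha$ is equivariant for the common subgroup, and $[-1]$ normalizes the theta group of $\fas{A}^{\otimes 2n}$ by the inverse automorphism. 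First one settles the case $\alpha\in X[2]$, where $[-1]$ does act on the target and a comparison of eigenspaces together with Schur's lemma and the explicit even/odd theta-null data forces $W^{-}\subseteq W^{+}$; then one propagates to arbitrary $\alpha$ by noting that the locus of $\alpha\in\widehat X$ where $m_\alpha^+$ fails is closed, and showing it empty, using that it is disjoint from the $2$-torsion and stable under the relevant symmetries. The careful bookkeeping of the normalized isomorphism $\psi_{\fas{A}}$ and of the signs it produces along this last passage is the delicate technical heart of the argument.
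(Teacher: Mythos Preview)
The paper does not prove this Proposition at all: it is stated with the attribution \cite{Khaled2} and used as a black box, so there is no ``paper's own proof'' to compare against. What you have written is therefore an attempt to supply a proof that the paper simply imports from Khaled.

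Your reduction is correct: once $m_\alpha$ is known to be surjective (which for $h\ge 3$ and $2n\ge 2$ is classical), surjectivity of $m_\alpha^+$ is equivalent to the inclusion $W^{-}\subseteq W^{+}$, and your cohomological rephrasing via $M_{V^+}$ and $M_{\fas{A}^{\otimes 2n}}$ is also fine (and indeed matches how the paper organizes the $h=2$ analysis in Step~1 of Section~\ref{section:mreg}). The difficulty, as you yourself acknowledge, is entirely in establishing $W^{-}\subseteq W^{+}$, and here your write-up is only a plan, not a proof. Two concrete issues: (i) the ``propagation'' step---arguing that the failure locus in $\widehat X$ is closed, disjoint from the $2$-torsion, and stable under certain symmetries, hence empty---does not work as stated, since a proper closed subset of $\widehat X$ can easily miss the $2$-torsion and still be nonempty (being invariant under translation by $2$-torsion is likewise far from forcing emptiness); (ii) even for $\alpha$ of order two, the phrase ``a comparison of eigenspaces together with Schur's lemma and the explicit even/odd theta-null data'' hides exactly the computation that needs to be done.

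For orientation, Khaled's actual argument (and the paper's own treatment of the harder case $h=2$ in Theorem~\ref{thm:bello}) does not proceed by a closed-locus/propagation trick. It goes through the isogeny $\xi=(p_1+p_2,\,p_1-p_2)$ on $X\times X$: one factors $m_\alpha$ as $(\mathrm{id}\otimes e)\circ\xi^*$, identifies the $\pm$-eigenspaces on the source via Proposition~\ref{xi equivariant}, and then shows directly, for every $\alpha$, that the $+$-part already surjects by exhibiting suitable elements (using that for $h\ge 3$ the relevant line bundles are base-point free, so the evaluation-at-$0$ step never obstructs). If you want to turn your sketch into a proof, that is the route to follow; the semicontinuity/propagation shortcut you propose does not close.
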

The main goal of this section is to prove that the same is true
for every $h= 2 $ and for \emph{general}
$\alpha\in\w X$. If furthermore we assume that $\A$ does not
have a base divisor, then we will show that the locus of
$[\alpha] \in \w X$ where
\begin{equation}\label{eq:moltiplicazione}
 m_\alpha^+:H^0(X,\fas{A}^{\otimes
2n})^+\otimes\coo{0}{X}{\fas{A}^{\otimes
2}\otimes\alpha}\longrightarrow\coo{0}{X}{ \fas{A}^{\otimes
2(n+1) }
\otimes\alpha }
\end{equation}
fails to be surjective has codimension at least 2. We will do
this by the methods of Khaled (\cite{Khaled2}).\par
To this end, consider the isogeny
\[\xi:X\times X\longrightarrow X\times X\]
\nomenclature[cnaa]{$\xi$}{Isogeny $X\times X\rightarrow X\times
X$
given by $(p_1+p_2,\;p_1-p_2)$}
given by $\xi=(p_1+p_2,\;p_1-p_2)$
\begin{lem}
 For any $[\alpha]\in\w X$ we have an isomorphism
\[\xymatrix{
\xi^*(p_1^*(\A\otimes\beta)\otimes
p_2^*(\A\otimes\alpha))\ar[rr]
&&p_1^*(\A^{\otimes2}\otimes\beta\otimes\alpha)\otimes
p_2^*(\A^{\otimes 2}\otimes\beta\otimes\alpha^\vee)}\]
\end{lem}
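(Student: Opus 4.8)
The plan is to compute $\xi^{*}$ one tensor factor at a time, separating the contribution of the ample symmetric sheaf $\A$ from that of the degree-zero sheaves. First I would note that composing $\xi=(p_1+p_2,\;p_1-p_2)$ with the two projections of the target $X\times X$ gives the addition morphism $p_1+p_2\colon X\times X\to X$ and the difference morphism $p_1-p_2\colon X\times X\to X$. Writing $p_1,p_2$ now for the projections of the source, compatibility of pullback with tensor products gives
\[
\xi^{*}\bigl(p_1^{*}(\A\otimes\beta)\otimes p_2^{*}(\A\otimes\alpha)\bigr)\simeq (p_1+p_2)^{*}(\A\otimes\beta)\otimes(p_1-p_2)^{*}(\A\otimes\alpha),
\]
so it suffices to treat the pieces $(p_1\pm p_2)^{*}\A$, $(p_1+p_2)^{*}\beta$ and $(p_1-p_2)^{*}\alpha$ independently. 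Here $\alpha,\beta$ are understood to lie in $\w X=\Pic^{0}(X)$, and this degree-zero hypothesis is exactly what makes the identity hold.

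For the degree-zero factors I would use the standard fact that an element $\gamma\in\Pic^{0}(X)$ is additive for the group law, i.e. $(p_1+p_2)^{*}\gamma\simeq p_1^{*}\gamma\otimes p_2^{*}\gamma$, together with $i_X^{*}\gamma\simeq\gamma^{\vee}$. Since $p_1-p_2=(p_1+p_2)\circ(\mathrm{id}\times i_X)$, this yields $(p_1-p_2)^{*}\alpha\simeq p_1^{*}\alpha\otimes p_2^{*}\alpha^{\vee}$ and $(p_1+p_2)^{*}\beta\simeq p_1^{*}\beta\otimes p_2^{*}\beta$. Multiplying, the $\Pic^{0}$-contribution is $p_1^{*}(\beta\otimes\alpha)\otimes p_2^{*}(\beta\otimes\alpha^{\vee})$, which already matches the degree-zero part of the claimed target.

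The substantive point is the $\A$-factor, where I would invoke the theorem of the cube. For morphisms $f,g\colon T\to X$ set $D(f,g)=(f+g)^{*}\A\otimes f^{*}\A^{-1}\otimes g^{*}\A^{-1}$; the cube theorem makes $(f,g)\mapsto D(f,g)$ symmetric and bi-additive, with $D(f,0)\simeq\OO{T}$, whence $D(f,-g)\simeq D(f,g)^{-1}$. Taking $T=X\times X$, $f=p_1$, $g=p_2$ gives $(p_1+p_2)^{*}\A\simeq p_1^{*}\A\otimes p_2^{*}\A\otimes D(p_1,p_2)$. For the difference I would write $p_1-p_2=p_1+(i_X\circ p_2)$, use the symmetry $i_X^{*}\A\simeq\A$ furnished by the normalized isomorphism $\psi_\A$ fixed at the start of the section to identify $(i_X\circ p_2)^{*}\A\simeq p_2^{*}\A$, and then use bi-additivity to get $D(p_1,\,i_X\circ p_2)=D(p_1,-p_2)\simeq D(p_1,p_2)^{-1}$. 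Hence $(p_1-p_2)^{*}\A\simeq p_1^{*}\A\otimes p_2^{*}\A\otimes D(p_1,p_2)^{-1}$, and tensoring with the previous identity the pairings $D(p_1,p_2)^{\pm1}$ cancel, leaving $p_1^{*}\A^{\otimes2}\otimes p_2^{*}\A^{\otimes2}$.

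Combining the two computations produces $p_1^{*}(\A^{\otimes2}\otimes\beta\otimes\alpha)\otimes p_2^{*}(\A^{\otimes2}\otimes\beta\otimes\alpha^{\vee})$, as required. The only genuine input is the cancellation of $D(p_1,p_2)^{\pm1}$, which rests on the bi-additivity from the theorem of the cube and on the symmetry $i_X^{*}\A\simeq\A$; everything else is the formal additive behaviour of $\Pic^{0}$. I expect the main obstacle to be purely bookkeeping: justifying the sign flip $D(f,-g)\simeq D(f,g)^{-1}$ in the second slot and checking that symmetry of $\A$ is truly what lets one replace $(i_X\circ p_2)^{*}\A$ by $p_2^{*}\A$. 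As a cross-check one could instead verify the isomorphism directly by the seesaw theorem, restricting both sides to the fibres $\{x\}\times X$ and to $X\times\{e\}$.
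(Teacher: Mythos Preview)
Your argument is correct, but the paper proceeds differently: it uses the see-saw principle, which is precisely the cross-check you mention in your last sentence. Restricting the left-hand side to $X\times\{y\}$ one gets $t_y^*\A\otimes t_{-y}^*\A\otimes\beta\otimes\alpha$, and the theorem of the square gives $t_y^*\A\otimes t_{-y}^*\A\simeq\A^{\otimes 2}$; restricting to $\{0\}\times X$ one gets $\A\otimes\beta\otimes i_X^*(\A\otimes\alpha)\simeq\A^{\otimes2}\otimes\beta\otimes\alpha^\vee$, using the symmetry of $\A$ and $i_X^*\alpha\simeq\alpha^\vee$. Both restrictions agree with those of the right-hand side, so see-saw concludes.

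The two routes are close in content: your cancellation $D(p_1,p_2)\otimes D(p_1,p_2)^{-1}\simeq\mathcal{O}_{X\times X}$, deduced from the cube theorem plus symmetry, is exactly the global identity $(p_1+p_2)^*\A\otimes(p_1-p_2)^*\A\simeq p_1^*\A^{\otimes2}\otimes p_2^*\A^{\otimes2}$, while the paper checks this fibrewise via the theorem of the square and lets see-saw globalize. The paper's version is shorter because it bypasses the pairing $D(-,-)$ and the sign-flip bookkeeping; yours has the advantage of making explicit which structural input (the cube theorem) is doing the work, and of handling the $\Pic^0$ and $\A$ parts in parallel rather than mixing them into slice computations.
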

\begin{proof}
 Observe that
\begin{align*}\xi^*(p_1^*(\A\otimes\beta)\otimes
p_2^*(\A\otimes\alpha))_{|X\times\{y\}}&\simeq t_y^*\A\otimes
t_{-y}^*\A\otimes t_y^*\alpha^\vee\otimes
t_{-y}^*\alpha\simeq\\
&\simeq\A^{\otimes 2}\otimes
\beta\otimes \alpha.
\end{align*}
Restricting to $\{0\}\times X$ we get
\begin{align*}\xi^*(p_1^*(\A\otimes\beta)\otimes
p_2^*(\A\otimes\alpha))_{|\{0\}\times X}&\simeq
\A\otimes\beta\otimes i^*
\A\otimes \alpha)\simeq \A^{\otimes 2}\otimes\beta\otimes
\alpha^\vee;
\end{align*}
the statement follows from the See-Saw principle.
\end{proof}
%\begin{ntz}
% When $[\alpha]=[\beta^\vee]$ we will denote $\Phi_{\beta^\vee}^\beta$
%simply by $\Phi_\beta$.
%\nomenclature[cnac]{$\Phi_\beta$}{$\Phi_\beta^\alpha$ with
%$[\beta]\simeq[\alpha^\vee]$}
%\end{ntz}

Composing with the K\"unneth isomorphism we have a map
\[\xi^*:\cox{\A\otimes\beta}\otimes\cox{\A\otimes\alpha}
\longrightarrow
\cox{\A^{\otimes2}\otimes\beta\otimes\alpha}\otimes\cox{\A^{
\otimes
2}\otimes\beta\otimes\alpha^\vee}.\]

Taking $[\alpha]=[\beta^\vee]$  we want to characterize the
image of $\cox{\A\otimes\beta}\otimes\cox{\A\otimes\beta^\vee}$
in  $\cox{\A^{\otimes2}}^+
\otimes\cox{\A^{\otimes2}\otimes\beta^{\otimes2}}$ through
$\xi^*$.

%it is a very natural question to
%ask
%oneselves which are the sections of
%$\cox{\A\otimes\beta}\otimes\cox{\A\otimes\beta^\vee}%$ that
%are sent by $\xi^*$ to $\cox{\A^{\otimes2}}^+
%\otimes\cox{\A^{\otimes2}\otimes\beta^{\otimes2}}$. %To this
%aim,the first thing to
%do is to characterize the latter subspace into the %former. 
In order to achieve this goal, we consider $\w T^{\A\otimes\beta}$, the involution of $\cox{\A\otimes\beta}\!\otimes\cox{\A\otimes\beta^\vee}$ defined by $\w
T^{\A\otimes\beta}(s\otimes t)=i^*(\psi_{\beta^\vee})i^*t\otimes
i^*(\psi_{\beta})i^*s$.
 Let us denote by
$$[\cox{\A\otimes\beta}\otimes\cox{\A\otimes\beta^\vee}]^{\pm}$$
the eigenspaces.
\begin{prop}\label{xi equivariant}
 For every $[\alpha]\in\w X$ we have
\[\xi^*[\cox{\A\otimes\beta}\otimes\cox{\A\otimes\beta^\vee}]
^\pm%
\subseteq\cox{\A^{\otimes 2}}^\pm\otimes\cox{ \A^{\otimes
2}\otimes\beta^{\otimes2}}.  \]

\end{prop}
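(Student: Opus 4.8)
The plan is to exhibit $\xi^*$ as a morphism that is equivariant for suitable involutions on its source and target, so that it automatically carries each eigenspace into the corresponding one. On the target $X\times X$ of $\xi$ I would put the involution $\rho(x,y)=(-y,-x)$, i.e.\ the composition of the swap of the two factors with $i_X\times i_X$, and on the source $X\times X$ the involution $\tau(x,y)=(-x,y)$, i.e.\ $i_X\times\mathrm{id}_X$. A direct computation with the group law gives the equivariance
\[
\xi\circ\tau=\rho\circ\xi,
\]
since both send $(x,y)$ to $(y-x,\,-x-y)$.

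First I would check that these two involutions induce, on global sections, exactly the two maps occurring in the statement. On the source bundle $p_1^*\A^{\otimes2}\otimes p_2^*(\A^{\otimes2}\otimes\beta^{\otimes2})$ one has $p_1\circ\tau=i_X\circ p_1$ and $p_2\circ\tau=p_2$, so the lift of $\tau$ determined by the normalized isomorphism $\psi_{\A^{\otimes2}}$ acts, after the K\"unneth identification, as $[-1]_{\A^{\otimes2}}\otimes\mathrm{id}$; its $\pm$-eigenspace is therefore precisely $\cox{\A^{\otimes2}}^\pm\otimes\cox{\A^{\otimes2}\otimes\beta^{\otimes2}}$, the space on the right-hand side of the Proposition. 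On the target bundle $p_1^*(\A\otimes\beta)\otimes p_2^*(\A\otimes\beta^\vee)$ one has $p_1\circ\rho=i_X\circ p_2$ and $p_2\circ\rho=i_X\circ p_1$, so the lift of $\rho$ interchanges the two tensor factors and applies the normalized isomorphisms; a direct check shows that on a decomposable section $s\otimes t$ it returns $i^*(\psi_{\beta^\vee})i^*t\otimes i^*(\psi_\beta)i^*s$, which is exactly the involution $\w T^{\A\otimes\beta}$ defined above.

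With these identifications, the statement would follow formally. For a section $\theta$ of the target bundle, the base equivariance $\xi\circ\tau=\rho\circ\xi$ yields $\xi^*(\rho^*\theta)=\tau^*(\xi^*\theta)$, that is
\[
\xi^*\bigl(\w T^{\A\otimes\beta}\theta\bigr)=\bigl([-1]_{\A^{\otimes2}}\otimes\mathrm{id}\bigr)\bigl(\xi^*\theta\bigr).
\]
Hence $\w T^{\A\otimes\beta}\theta=\pm\theta$ forces $\xi^*\theta$ into the $\pm$-eigenspace $\cox{\A^{\otimes2}}^\pm\otimes\cox{\A^{\otimes2}\otimes\beta^{\otimes2}}$, which is the assertion.

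The delicate point, and the one I would treat most carefully, is the last displayed identity at the level of the \emph{lifted} actions on the line bundles, not merely on the base varieties: concretely, that $\xi^*\circ\rho^*=\tau^*\circ\xi^*$ as maps of global sections once the bundle isomorphism of the preceding Lemma is fixed. The equality of the underlying maps of varieties is immediate, but the two compositions a priori differ by a nonzero scalar, since each involves a choice of lift of an order-two automorphism to the relevant line bundle. To pin this scalar down to $1$ I would invoke the defining property of the normalized isomorphism $\psi_\A$, namely that it is the canonical lift acting as the identity on the fibre over $0\in X$, together with its compatibility with the isomorphism of the preceding Lemma. Since $0$ (resp.\ the origin of $X\times X$) is fixed by all the maps $\xi,\tau,\rho$ involved, comparing the two induced maps on the fibre over the origin shows the scalar equals $1$. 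Establishing this normalization compatibility is the heart of the argument; granted it, the eigenspace statement is a formal consequence of the equivariance $\xi\circ\tau=\rho\circ\xi$.
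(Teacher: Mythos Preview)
Your argument is correct and is essentially the approach the paper has in mind: the paper omits the proof, referring to Khaled's Proposition~2.2, whose method is precisely to exhibit $\xi^*$ as equivariant for the involution $\tau=(i_X\times\mathrm{id})$ on the source and the swap-and-invert involution $\rho$ on the target, using the identity $\xi\circ\tau=\rho\circ\xi$. You have also correctly isolated the one genuine technical point, namely checking that the two lifts to line bundles agree on the nose (not just up to scalar), which is settled by the normalization at the origin.
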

The proof of this statement is just a slight modification of Khaled's proof of
\cite[Proposition 2.2]{kh} and therefore we omit it. The complete proof can also be found in \cite[Prop. 2.2.3]{MyThesis}

\begin{thm}\label{thm:bello}
 Let $\A$ be an ample symmetric line bundle on $X$. Take
$[\alpha]\in\w X$.
Then the multiplication map
\[m:\cox{\A^{\otimes 2}}\otimes\cox{\A^{\otimes
2}\otimes\alpha}\longrightarrow\cox{\A^{\otimes
4}\otimes\alpha}\]
is surjective if and only if the following multiplication map is
surjective
\[m^+:\cox{\A^{\otimes 2}}^+\otimes\cox{\A^{\otimes
2}\otimes\alpha}\longrightarrow\cox{\A^{\otimes
4}\otimes\alpha}\]
\end{thm}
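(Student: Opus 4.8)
The plan is to treat the two implications asymmetrically, since one of them is purely formal. Because $\cox{\A^{\otimes 2}}^{+}$ is a linear subspace of $\cox{\A^{\otimes 2}}$ and $m^{+}$ is nothing but the restriction of $m$ to $\cox{\A^{\otimes 2}}^{+}\otimes\cox{\A^{\otimes 2}\otimes\alpha}$, we have $\mathrm{Im}(m^{+})\subseteq\mathrm{Im}(m)$, so surjectivity of $m^{+}$ trivially forces surjectivity of $m$. Hence the whole content lies in the converse, and I would in fact aim at the sharper, \emph{unconditional} statement $\mathrm{Im}(m^{+})=\mathrm{Im}(m)$, from which the stated equivalence is immediate. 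Decomposing $\cox{\A^{\otimes 2}}=\cox{\A^{\otimes 2}}^{+}\oplus\cox{\A^{\otimes 2}}^{-}$ under $[-1]_{\A^{\otimes 2}}$, this reduces to the inclusion
\[ m\bigl(\cox{\A^{\otimes 2}}^{-}\otimes\cox{\A^{\otimes 2}\otimes\alpha}\bigr)\subseteq\mathrm{Im}(m^{+}), \]
that is, to showing that every product $s^{-}\cdot t$ of an antisymmetric section with a section of $\A^{\otimes 2}\otimes\alpha$ can be rewritten as a sum of products of symmetric sections with sections of $\A^{\otimes 2}\otimes\alpha$.

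The obstacle to arguing directly is that the target $\cox{\A^{\otimes 4}\otimes\alpha}$ carries no natural involution: for non-symmetric $\alpha$ the map $i_X^{*}$ sends it to $\cox{\A^{\otimes 4}\otimes\alpha^{\vee}}$, so there is no eigenspace splitting of the target to play against the splitting of the first factor. My plan is to remove this asymmetry through the isogeny $\xi=(p_1+p_2,\,p_1-p_2)$ introduced above. Since $2_{\w X}$ is surjective I may write $\alpha=\beta^{\otimes 2}$, and then the Lemma together with the K\"unneth formula realizes $\xi^{*}$ as an injection
\[ \xi^{*}\colon\cox{\A\otimes\beta}\otimes\cox{\A\otimes\beta^{\vee}}\longrightarrow\cox{\A^{\otimes 2}}\otimes\cox{\A^{\otimes 2}\otimes\beta^{\otimes 2}} \]
on whose source the involution $\w T^{\A\otimes\beta}$ makes the roles of $\beta$ and $\beta^{\vee}$ symmetric. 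The engine is Proposition \ref{xi equivariant}: it guarantees that $\xi^{*}$ carries the $\pm$-eigenspaces of $\w T^{\A\otimes\beta}$ into $\cox{\A^{\otimes 2}}^{\pm}\otimes\cox{\A^{\otimes 2}\otimes\beta^{\otimes 2}}$, so that upstairs on $X\times X$ the symmetric/antisymmetric decomposition is genuinely compatible with the $[-1]_{\A^{\otimes 2}}$-decomposition of the first tensor factor. I would then present $m$ as restriction to the diagonal of $N=p_1^{*}\A^{\otimes 2}\otimes p_2^{*}(\A^{\otimes 2}\otimes\beta^{\otimes 2})$ and track the two eigen-pieces through this restriction, using that $\xi$ intertwines the factor-swap on the source with $\mathrm{id}\times i_X$ on the target.

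The step I expect to be the real difficulty is converting this eigenspace bookkeeping into the inclusion $\mathrm{Im}(m^{-})\subseteq\mathrm{Im}(m^{+})$, because a single $\xi^{*}$ is far from surjective: its image has dimension $h^{0}(\A)^{2}$, whereas the codomain has dimension $2^{2g}h^{0}(\A)^{2}$, so one copy of $\xi^{*}$ cannot by itself detect all of $\mathrm{Im}(m)$. To bridge this I would let $\beta$ run over the fibre $2_{\w X}^{-1}(\alpha)$ of the $2^{2g}$ solutions of $\beta^{\otimes 2}=\alpha$; the corresponding images of $\xi^{*}_{\beta}$ each respect the $\pm$-splitting in the same way, and the dimension count suggests that together they carry enough sections to reconstruct an arbitrary product $s^{-}\cdot t$. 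Combining this with the equivariance then shows that the antisymmetric contributions are already hit by symmetric ones, yielding $\mathrm{Im}(m^{+})=\mathrm{Im}(m)$ and hence the equivalence for every $[\alpha]\in\w X$; this is the mechanism Khaled employs for Proposition \ref{khaledfr:1.4.4}, and I would follow his computation at the one genuinely technical point. For the applications one finally feeds this unconditional equivalence into Proposition \ref{prop:blp2}, which supplies the surjectivity of $m$ for the general $[\alpha]$, to deduce surjectivity of $m^{+}$ there.
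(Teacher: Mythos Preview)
Your setup is the paper's: use $\xi^{*}$, invoke the equivariance of Proposition~\ref{xi equivariant}, and let $\beta$ range over the $2^{2g}$ square roots of $\alpha$. What is missing is the concrete mechanism that turns this framework into a proof, and your ``dimension count suggests'' together with ``combining this with the equivariance then shows'' does not supply it.

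The paper's engine is the factorization $\Delta=\xi\circ f$ with $f(x)=(x,0_X)$, so that $m=(\mathrm{id}\otimes\mathrm{e}_{0})\circ\xi^{*}$, where $\mathrm{e}_{0}$ is evaluation of sections at the origin. Iterating, $m\circ\xi^{*}_{\beta}=2_X^{*}\otimes\mathrm{e}_{\A\otimes\beta^{\vee}}$ on $H^{0}(\A\otimes\beta)\otimes H^{0}(\A\otimes\beta^{\vee})$. Because $\bigoplus_{\beta}\xi^{*}_{\beta}$ and $\bigoplus_{\beta}2_X^{*}$ are isomorphisms, surjectivity of $m$ is \emph{equivalent} to the pointwise condition
\[
(\dag)\qquad 0\notin\mathrm{Bs}(\A\otimes\beta^{\vee})\quad\text{for every }\beta\text{ with }\beta^{\otimes 2}\simeq\alpha.
\]
This is what replaces your heuristic dimension count. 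The paper then uses $(\dag)$ constructively: for each $s\in H^{0}(\A\otimes\beta)$ one writes down, by hand, a $\w T^{\A\otimes\beta}$-invariant tensor $\sigma$ with $(2_X^{*}\otimes\mathrm{e})(\sigma)=2_X^{*}(s)$; when $\mathrm{e}(\iota(s))=0$ one needs an auxiliary $t$ with $\mathrm{e}(\iota(t))\neq 0$, and that is exactly where $(\dag)$ enters. Proposition~\ref{xi equivariant} then transports this to the statement about $m^{+}$.

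Your stronger target $\mathrm{Im}(m^{+})=\mathrm{Im}(m)$ is in fact compatible with this argument (for each $\beta$ either both images vanish or $(\dag)$ holds for that $\beta$ and the construction goes through), but the paper does not phrase it that way and nothing in your outline produces the explicit $\sigma$. Saying you would ``follow Khaled's computation'' is pointing at the right source, but the specific step you need to import is precisely the evaluation-at-$0$ factorization and the two-case construction of $\sigma$; without naming those, the proposal remains a plan rather than a proof.
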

\begin{proof} Obviously, if the map $m^+$ is surjective, then also $m$ is. Therefore we are left to prove that the surjectivity of $m$ implies the surjectivity of $m^+$.
As observed by Khaled, $m$ is the composition of
$$\xi^*:H^0(X,\A^{\otimes 2})\otimes H^0(X,\A^{\otimes 2}\otimes
\alpha)\rightarrow H^0(X,\A^{\otimes 4}\otimes \alpha)\otimes
H^0(X,\A^{\otimes 4}\otimes \alpha^\vee)$$ with 
$\mathrm{id}\otimes\mathrm{e}_{\A^{\otimes
4}\otimes\alpha^\vee}$, where $\mathrm{e}_{\A^{\otimes
4}\otimes\alpha^\vee}$ denotes the evaluation in 0 of
the sections  of $\A^{\otimes
4}\otimes\alpha^\vee$. In fact, if we denote by
$\Delta:X\rightarrow X\times X$ the diagonal immersion, then
$m$ is just $\Delta^*$ composed with the K\"unneth isomorphism.
Now we can write $\Delta=\xi\circ f$ where $f:X\rightarrow
X\times X$ is the morphism defined by $x\mapsto (x,0_X)$. Now
observe that, modulo
K\"unneth isomorhism,
 $$f^*:H^0(X,\A^{\otimes 4})\otimes 
H^0(X, \A^{\otimes 4}\otimes\alpha^{\vee})\rightarrow
H^0(X,\A^{\otimes 4})$$ 
 is exactly
$\mathrm{id}\otimes\mathrm{e}_{\A^{\otimes
4}\otimes\alpha^\vee}$.\\
\indent Observe that $\xi\circ\xi$ is the map $(x,y)\mapsto
(2_X(x),2_X(y))$. Hence, taking $\beta\in\Pic^0(X)$ such that
$\beta^{\otimes 2}\simeq \alpha$, we can write 
$$m\circ\xi^*:H^0(X,\A\otimes\beta)\otimes
H^0(X,\A\otimes\beta^\vee)\rightarrow H^0(X;\A^{\otimes
4}\otimes \alpha)$$
as $\mathrm{id}\otimes\mathrm{e}_{\A^{\otimes
4}\otimes\alpha^\vee}\circ (2_X^*(-)\otimes
2_X^*(-))=2_X^*\otimes\mathrm{e}_{\A^{\otimes
2}\otimes\beta^\vee}$. Thus we can consider the following
commutative diagram 
\[
\xymatrix{%
\displaystyle{\bigoplus_{[\beta^{\otimes 2}]= [\alpha]}}
H^0(\A\otimes\beta)\otimes
H^0(\A\otimes\beta^\vee)\ar[rr]^{\xi^*}\ar[dr]_{
2_X^*\otimes\mathrm{e}_{\A\otimes\beta^\vee} } & &
H^0(\A^{\otimes 2})\otimes
H^0(\A^{\otimes 2}\otimes\alpha)\ar[dl]^{m}\\
 &H^0(\A^{\otimes 4}\otimes\alpha) &
}
\]
where the upper arrow is an isomorphism by projection formula. It follows that the surjectivity of $m$ is equivalent to the surjectivity of 
$$
\bigoplus_{[\beta^{\otimes
2}]=[\alpha ]}2_X^*\otimes\mathrm{e}_{\A\otimes\beta^\vee}
:\bigoplus_{[\beta^{\otimes 2}]= [\alpha]}
H^0(\A\otimes\beta)\otimes
H^0(\A\otimes\beta^\vee)\rightarrow H^0(\A^{\otimes 4}\otimes\alpha),
$$
which, in turn, is equivalent the following condition:
\begin{equation}\tag{\dag}\label{condizione}
 \text{$0\notin\mathrm{Bs}(\A\otimes\beta^\vee)$ for every
$\beta\in\w X$ such that $\beta^{\otimes 2}\simeq\alpha$}.%,
\end{equation}
%where, as usual $\mathrm{Bs}(\A\otimes\beta^\vee)$ stands for the
%base locus of $\A\otimes\beta^\vee$.\par
We claim that \eqref{condizione} implies the surectivity of  $m^+$, the statement of the theorem will follow directly.

\indent Thanks to Proposition \ref{xi equivariant} and
the
isomorphism
\[\xymatrix{
\bigoplus_{[\beta^{\otimes 2}]\simeq[\alpha]}
H^0(\A\otimes\beta)\ar[rr]^{2_X^*} &&
H^0(\A^{\otimes4}\otimes\alpha),}\]
yielded by $2_X^*$ and projection formula, it is enough to check
that for every $[\beta]$ satisfying
$[\beta^{\otimes2}]=[\alpha]$
\[2_X^*\otimes\mathrm{e}_{\A\otimes\beta^\vee}
[H^0(\A\otimes\beta)\otimes
H^0(\A\otimes\beta^\vee)]^+=2_X^*(H^0(\A\otimes\beta)). 
\]
Hence for each $s\in H^0(\A\otimes\beta)$, we need a section $\sigma \in (H^0(\A\otimes\beta)\otimes
H^0(\A\otimes\beta^\vee))^+$ such that
$2_X^*\otimes\mathrm{e}_{\A\otimes\beta^\vee}
(\sigma)=2_X^*(s)$.
Therefore take $s\in H^0(X,\A\otimes\beta)$ and denote by
$\lambda$ the constant
$\mathrm{e}_{\A\otimes\beta^\vee}([i^*\psi_\beta\circ
i^*](s))$. If $\lambda\neq 0$, 
take $$\sigma:=\frac{1}{\lambda}\cdot(s\otimes[i^*\psi_\beta\circ
i^*](s)).$$
%let
%$\mu$ be a non zero constant such that $\mu^2=\lambda$; then we
%can
%write
%\[
%2_X^*(s)=2_X^*\otimes\mathrm{e}_{\A\otimes\beta^\vee}\left(\frac
%{
%s } { \mu}\otimes [i^*\psi_\beta\circ
%i^*]\left(\frac{s}{\mu}\right) \right)
%\]
%and \[\frac{s}{
%\mu}
%\otimes
%[i^*\psi_\beta\circ
%i^*]\left(\frac{s}{\mu}
%\right)\] is clearly an
%element
%of $(H^0(\A\otimes\beta)\otimes
%H^0(\A\otimes\beta^\vee))^+$.\par
 If, otherwise, 
$\lambda= 0$, it follows from \eqref{condizione}
that there exists $t\in H^0(X, \A\otimes\beta)$ such that
$\mathrm{e}_{\A\otimes\beta^\vee}([i^*\psi_\beta\circ
i^*](t))=1$. Then, take $\sigma$ to be the section
\[ (s+t) \otimes [i^*\psi_\beta\circ
i^*]\left(s+t\right)-t\otimes[i^*\psi_\beta\circ i^*](t)\quad
\in\quad (H^0(\A\otimes\beta)\otimes
H^0(\A\otimes\beta^\vee))^+.\]
Applying $2_X^*\otimes\mathrm{e}_{\A\otimes\beta^\vee
} $ to $\sigma$ we get
%is in $(H^0(\A\otimes\beta)\otimes
%H^0(\A\otimes\beta^\vee))^+$
%then 
\begin{align*}
2_X^*\otimes\mathrm{e}_{\A\otimes\beta^\vee
}(\sigma)&= 2_X^*\otimes\mathrm{e}_{\A\otimes\beta^\vee
} \left( (s+t) \otimes [i^*\psi_\beta\circ
i^*]\left(s+t\right)-t\otimes[i^*\psi_\beta\circ i^*](t)
\right)=\\&=2_X^*(s+t)\cdot 1-2_X^*(t)\cdot 1=2_X^*(s).
\end{align*}
%and again
%\[ s+t \otimes [i^*\psi_\beta\circ
%i^*]\left(s+t\right)-t\otimes[i^*\psi_\beta\circ i^*](t)\]
%is in $(H^0(\A\otimes\beta)\otimes
\end{proof}
\begin{cor}\label{cor:bello}
\begin{enumerate}
 \item For every $\A$ ample symmetric invertible sheaf on $X$
the multiplication map
\begin{equation}\label{eq:mpiu}m^+:\cox{\A^{\otimes
2}}^+\otimes\cox{\A^{\otimes
2}\otimes\alpha}\longrightarrow\cox{\A^{\otimes
4}\otimes\alpha}\end{equation}
is surjective for the generic $[\alpha]\in\Pic^0(X)$.
\item If furthermore $\A$ does not have a base divisor, then the
locus where \eqref{eq:mpiu} is not surjective has codimension at
least 2.
\end{enumerate}
\end{cor}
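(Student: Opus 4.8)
The plan is to obtain both assertions immediately from Theorem \ref{thm:bello} together with Proposition \ref{prop:blp2}; no further geometric input is required, so the whole content of the argument is the observation that the equivalence proved in Theorem \ref{thm:bello} is valid pointwise in $[\alpha]$.

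First I would record the key reformulation. Theorem \ref{thm:bello} asserts that, for each fixed $[\alpha]\in\w X$, the map $m^+$ of \eqref{eq:mpiu} is surjective if and only if the full multiplication map
\[
m:\cox{\A^{\otimes 2}}\otimes\cox{\A^{\otimes 2}\otimes\alpha}\longrightarrow\cox{\A^{\otimes 4}\otimes\alpha}
\]
is surjective. Since this equivalence holds for a single $[\alpha]$ at a time, the locus $Z^+\subseteq\Pic^0(X)$ on which $m^+$ fails to be surjective coincides \emph{exactly} with the locus $Z\subseteq\Pic^0(X)$ on which $m$ fails to be surjective. Thus any information about $Z$ transfers verbatim to $Z^+$.

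For part (1), the first assertion of Proposition \ref{prop:blp2} guarantees that $m$ is surjective for the general $[\alpha]\in\w X$, that is, that $Z$ is a proper closed subset of $\Pic^0(X)$. Under the identification $Z^+=Z$ this says precisely that $m^+$ is surjective for the generic $[\alpha]$, which is the claim. For part (2), the extra hypothesis that $\A$ has no base divisor lets me invoke the second assertion of Proposition \ref{prop:blp2}, which bounds $\mathrm{codim}\,Z\geq 2$; using $Z^+=Z$ once more, the locus where \eqref{eq:mpiu} fails to be surjective has codimension at least $2$.

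I do not expect any genuine obstacle here: all of the substantive work has already been carried out, in Theorem \ref{thm:bello} (which reduces surjectivity of $m$ to that of $m^+$ via the descent through $\xi^*$ and the eigenspace analysis of Proposition \ref{xi equivariant}) and in Proposition \ref{prop:blp2} (the classical generic surjectivity of $m$ together with the Pareschi--Popa codimension estimate). The only point that must be stated with care is that Theorem \ref{thm:bello} is an \emph{if and only if} valid for each individual $[\alpha]$, so that the two non-surjectivity loci are literally equal rather than merely both being proper or both being small; once this is observed, both parts follow at once.
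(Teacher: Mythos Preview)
Your proposal is correct and follows exactly the paper's approach: the paper's proof simply states that the corollary follows directly from Theorem \ref{thm:bello} together with Proposition \ref{prop:blp2}. Your version spells out carefully that the pointwise equivalence in Theorem \ref{thm:bello} gives $Z^+ = Z$, which is precisely the intended reasoning.
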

\begin{proof}
 The statement follows directly from the Theorem \ref{thm:bello}
and the
corresponding statements (see Proposition \ref{prop:blp2}) about
the map $m$.
\end{proof}
Now we are ready to prove the result that is the main point
of this section.
\begin{thm}\label{mykhaled: 2}\label{prop:bella}
Let $\A$ be an ample symmetric invertible sheaf on $X$, then
\begin{enumerate}
 \item
there exist a non-empty open subset $U\subseteq\Pic^0(X)$ such that for every $n\in \Z$
with $n\geq 1$ and 
and every $\alpha\in U$ the following map is surjective
\begin{equation}\label{eq: multalphapiu}
m_\alpha^+:H^0(X,\fas{A}^{\otimes
2n})^+\otimes\coo{0}{X}{\A^{\otimes
2}\otimes\alpha}\longrightarrow\coo{0}{X}{\A^{\otimes 2n+2 }
\otimes\alpha };
\end{equation}
\item if, furthermore, $\A$ does not have a base divisor, then the
locus $Z$ in $\Pic^0(X)$ where \eqref{eq: multalphapiu} fails to
be surjective has codimension at least 2.
\end{enumerate}

\end{thm}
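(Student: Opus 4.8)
The plan is to treat the case $n=1$ as the base case --- where \eqref{eq: multalphapiu} is \emph{exactly} the map $m^+$ of Corollary \ref{cor:bello}, so that both (1) and (2) hold on the open set $U\subseteq\Pic^0(X)$ produced there --- and to reduce every case $n\geq 2$ to it. The mechanism is to factor the first tensor factor $H^0(X,\A^{\otimes 2n})^+$ through a product of plus-eigenspaces of \emph{lower} even powers, in such a way that the remaining multiplication is handled by Khaled's Proposition \ref{khaledfr:1.4.4}, which is available for every $h\geq 3$: I will feed it the exponent $h=4$.

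First I would record the multiplicativity of the eigenspace decomposition. Since the normalized isomorphism $\psi_\A$ is compatible with tensor products, the lifted involutions satisfy $[-1]_{\A^{\otimes a+b}}(s\cdot t)=[-1]_{\A^{\otimes a}}(s)\cdot[-1]_{\A^{\otimes b}}(t)$, so cup product carries $H^0(\A^{\otimes 2(n-1)})^+\otimes H^0(\A^{\otimes 2})^+$ into $H^0(\A^{\otimes 2n})^+$. Writing $\A^{\otimes 2n}=\A^{\otimes 2(n-1)}\otimes\A^{\otimes 2}$ and calling this product map $p$, associativity of multiplication of sections gives that $m_\alpha^+\circ(p\otimes\mathrm{id})$ coincides with the composition
\[
H^0(\A^{\otimes 2(n-1)})^+\otimes H^0(\A^{\otimes 2})^+\otimes\coo{0}{X}{\A^{\otimes 2}\otimes\alpha}\xrightarrow{\ \mathrm{id}\otimes m^+\ }H^0(\A^{\otimes 2(n-1)})^+\otimes\coo{0}{X}{\A^{\otimes 4}\otimes\alpha}\xrightarrow{\ \mu\ }\coo{0}{X}{\A^{\otimes 2n+2}\otimes\alpha},
\]
where $m^+$ is the $n=1$ map of Corollary \ref{cor:bello} and $\mu$ is the map of Proposition \ref{khaledfr:1.4.4} with $k=2(n-1)$ and $h=4$.

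The conclusion is then immediate. For $n\geq 2$ we have $k=2(n-1)$ even and positive and $h=4\geq 3$, so $\mu$ is surjective for \emph{every} $\alpha\in\Pic^0(X)$ by Proposition \ref{khaledfr:1.4.4}; and $\mathrm{id}\otimes m^+$ is surjective precisely when $m^+$ is, hence for every $\alpha\in U$ by Corollary \ref{cor:bello}(1). Thus $\mu\circ(\mathrm{id}\otimes m^+)=m_\alpha^+\circ(p\otimes\mathrm{id})$ is surjective for $\alpha\in U$, and since its image is contained in that of $m_\alpha^+$, the map \eqref{eq: multalphapiu} is surjective for all $\alpha\in U$. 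As $U$ is independent of $n$, this proves (1). For (2), the same factorization shows that, because $\mu$ is surjective for all $\alpha$, the locus where \eqref{eq: multalphapiu} fails is contained in the locus where the $n=1$ map $m^+$ fails; when $\A$ has no base divisor the latter has codimension at least $2$ by Corollary \ref{cor:bello}(2), giving the claim.

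I expect essentially no obstacle in this step: the genuinely delicate work has already been absorbed into the base case, namely Theorem \ref{thm:bello} (reducing the plus-eigenspace map $m^+$ to the full multiplication map $m$) together with the Pareschi--Popa input behind Proposition \ref{prop:blp2}. Relative to that, the only points needing care are the multiplicativity of the $\pm$-decomposition, so that $p$ genuinely lands in $H^0(\A^{\otimes 2n})^+$, and the bookkeeping of exponents ensuring that $h=4$ satisfies the hypothesis $h\geq 3$ of Proposition \ref{khaledfr:1.4.4}; both are routine, and a pleasant by-product of the factorization is that a single open set $U$ works simultaneously for all $n$.
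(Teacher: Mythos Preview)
Your proof is correct and uses the same ingredients as the paper: the base case (Corollary \ref{cor:bello}), Khaled's surjectivity (Proposition \ref{khaledfr:1.4.4}), and the factorization through $H^0(\A^{\otimes 2(n-1)})^+\otimes H^0(\A^{\otimes 2})^+\otimes H^0(\A^{\otimes 2}\otimes\alpha)$. The only difference is the order in which you traverse the commutative square: the paper first multiplies $H^0(\A^{\otimes 2(n-1)})^+$ against $H^0(\A^{\otimes 2}\otimes\alpha)$ (the map for $n-1$, handled by induction) and then applies Khaled with $k=2$, $h=2n$; you instead first apply the $n=1$ map $m^+$ and then Khaled with $k=2(n-1)$, $h=4$. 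Your ordering is marginally cleaner, since it eliminates the induction and makes it transparent that the failure locus for every $n$ is contained in the failure locus for $n=1$, so a single open set $U$ works uniformly.
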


\begin{proof}% Since the locus $Z$ where \eqref{eq: multalphapiu}
%fails to
%be surjective  are empty for $h\geq 3$ (see Proposition
%\ref{khaledfr:1.4.4}) we can suppose $h=2$. 
We will proceede by induction on
$n$, with base case given by Corollary \ref{cor:bello}.\\

\textit{Case $n>1$}. Observe the following commutative diagram:
\begin{footnotesize}
\[
\xymatrix{\coo{0}{X}{\A^{\otimes 2}}^+\hspace{-1.5mm}\otimes\coo{0}{X}{\A^{\otimes 2(
n-1)}}^+\hspace{-1.5mm}\otimes\coo{0}{X}{\A^{\otimes
2}\otimes\alpha}\ar[r]^{\hspace{11mm}\varphi_\alpha}\ar[d] &
\coo{0}{X}{{\A}^{\otimes
2}}^+\hspace{-1.5mm}\otimes\coo{0}{X}{\A^{\otimes
2(n-1)+2}\otimes\alpha}\ar[d]^{
\psi_\alpha } \\
\coo{0}{X}{\A^{\otimes
2n}}^+\hspace{-1.5mm}\otimes\coo{0}{X}{\A^{\otimes
2}\otimes\alpha}\ar[r]_{\hspace{8mm}m_{
\alpha } }
&\coo{0}{X}{\A^{\otimes 2n+2}\otimes \alpha}
}
\]
\end{footnotesize}
Let $[\alpha]\in\Pic^0(X)$ be is such that $m_\alpha$ is not surjective. Then, necessarily the map $\psi_\alpha\circ\varphi_\alpha$ is not surjective. Since $2+2(n-1)\geq 3$, by Proposition \ref{khaledfr:1.4.4},
$\psi_\alpha$ is surjective for every $[\alpha]\in\w{X}$. Per force, then, the map $\varphi_\alpha$ could not be surjective. It follows that the locus of point $[\alpha]$ such that $m_\alpha$ is not surjective is contained in the following set:
$$W:=\{[\alpha]\in\Pic^0(X)\:
|\:\varphi_\alpha\;\text{is not
surjective}\}.$$ %and\linebreak
%$\mathrm{codim}\{[\alpha]\in\Pic^0(X)\: |\:\psi_\alpha\;\text{is
%not surjective}\}$
By inductive hypothesis $W$ has codimension $\ge 1$, in the general case, or $\ge 2$, when
$\A$ has not a base divisor. Therefore the Theorem is proved.
\end{proof}

%%%%%%%%%%%%%%%%%%%%%%%%%%%%%%%%%%%%%%%%%%%%%%%%%%%%%%%%%%%%%%%%%%%%%%%%%%%%%%%%%%%%%%%%%%%%%%%%%%%%%%%%%%%%%%
%%%%%%%%%%%%%%%%%%%%%%%%%%%%5 MAIN SECTION   %%%%%%%%%%%%%%%%%%%%%%%%%%%%%%%%%%%%%%%%%%%%%%%%%%%%%%%%%%%%%%%%%
%%%%%%%%%%%%%%%%%%%%%%%%%%%%%%%%%%%%%%%%%%%%%%%%%%%%%%%%%%%%%%%%%%%%%%%%%%%%%%%%%%%%%%%%%%%%%%%%%%%%%%%%%%%%%%
\section{Equations and Syzygies of Kummer
Varieties}\label{section:mreg}
Putting together the results of the previous paragraphs, in this
last Section we will prove Theorem A and Theorem C. First of all, observe that the
case $p=0$ of the Theorem A follows
directly as a corollary of Khaled's work (see Proposition
\ref{khaledfr:1.4.4}).
Thus we may suppose $p\geq1$.\par
%\end{rmk}
Our strategy will be to use the part (b)
of Lemma \ref{previous lemma} and to reduce the problem to checking
the surjectivity of
\begin{equation}\label{eq:mult kummer}
\multshortk{A^{\otimes n}}{\sottosopra{M}{A^{\otimes n}}{\otimes p}\otimes A^{\otimes
nh}}{\sottosopra{M}{A^{\otimes n}}{\otimes
p}\otimes A^{\otimes n(h+1)}}
\end{equation}

\noindent for every $h\geq r+1$. Let $\A$ be an ample symmetric line
bundle on $X$ such that $\A^{\otimes
2}\simeq \pi^*A$, we split the proof in two steps. \\
\indent In first place, the surjectivity of \eqref{eq:mult kummer} is implied by the surjectivity of
\begin{small}
\begin{equation}\label{eq:*}
\multshortp{\A^{\otimes2n}}{\pi^*(M_{A^{\otimes n}})^{\otimes
p}\otimes\A^{\otimes2nh}}{\pi^*(M_{A^{\otimes n}})^{\otimes
p}\otimes\A^{\otimes 2n(h+1)}}.
\end{equation}
\end{small}
Before going any further we need to introduce some notation.
\begin{ntz}
Suppose that $A$ is an ample line bundle on $\kummerx$ and let
$\A$ be an invertible sheaf on $X$ such that
$\pi_X^*A=\A^{\otimes2}$. Take $n$ an integer such that
$A^{\otimes n}$ is globally generated and consider the 
following exact sequence of vector bundles:
$$
\escorta{M_{A^{\otimes n}}}{\coo{0}{\kummerx}{A^{\otimes
n}}\otimes\OO{\kummerx}}{A^{\otimes n}}.
$$
By pulling back via the canonical surjection $\pi_X$ we get: 
\[\escorta{\pi_X^*(M_{A^{\otimes
n}})}{\coo{0}{X}{\A^{\otimes
2n}}^+\otimes\OO{X}}{\A^{\otimes 2n}}.\]
Hence, after defining,
\[W_n:=\cox{\A^{2n}}^+\]\nomenclature[cnb]{$W_n$}{$\cox{\A^{2n}}
^+$ }
we have that $\pi_X^*M_{A^{\otimes n}}\simeq
M_{W_n}$\nomenclature[cnc]{$M_{W_n}$}{$\pi_X^*M_{A^{\otimes
n}}$}.
\end{ntz}
From now on given a sheaf $\F$ on $X$, we will often write
$H^0(\fas{F})$
instead of $\cox{\F}$.\par 
We will split the proof in two steps.

\noindent\textbf{Step 1:} \textit{Reduction to 
an $M$-regularity problem}

Consider the vanishing locus $V^1(M_{W_n}\otimes\A^{\otimes2})$.
We claim that it coincides with
the locus of $[\alpha]\in\w X$ such that
the multiplication map
\[
m_\alpha^+:H^0(\fas{A}^{\otimes2n})^+\otimes\Coo{\A^{\otimes2}
\otimes\alpha}\longrightarrow\Coo{\A^{
\otimes 2(n+1) }
\otimes\alpha }
\]
is not surjective. In fact, from the short exact sequence
\[\escorta{M_{W_n}\otimes\A^{\otimes
2}\otimes\alpha}{W_n\otimes\A^{\otimes
2}\otimes\alpha}{\A^{\otimes 2(n+1)}\otimes\alpha}.\]
taking cohomology we deduce that the surjectivity fo $m_\alpha^*$ is equivalent to the vanishing of $H^1(M_{W_n}\otimes\A^{\otimes
2}\otimes\alpha)$. 
Thanks to this characterization of the locus on $\Pic^0(X)$
where $m_\alpha^+$ fails to be surjective we were able to prove the following
%\indent 
\begin{lem}\label{lem:redmreg}
 Let $\A$ and $\fas{E}$ be an ample symmetric sheaf on an abelian
variety $X$ and
  a coherent sheaf on $X$, respectively. If
$\fas{E}\otimes\A^{\otimes -2}$ is $M$-regular, then  the
multiplication map
\begin{equation}\label{eq:**}H^0(X,\:\A^{\otimes 2n})^+\otimes
H^0(X,\fas{E})\longrightarrow H^0(X,\:\A^{\otimes
2n}\otimes\fas{E})\end{equation}
is surjective for every $n\geq 1$.
\end{lem}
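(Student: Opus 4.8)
The plan is to realize every section of $H^0(X,\A^{\otimes 2n}\otimes\fas{E})$ as a sum of triple products $s^{+}\cdot t\cdot u$, in which the factor coming from $\A^{\otimes 2n}$ lies in the $(+)$-eigenspace, and then to regroup the product so that it visibly lies in the image of \eqref{eq:**}. The starting observation is the factorization
\[
\A^{\otimes 2n}\otimes\fas{E}\;\simeq\;\A^{\otimes 2n+2}\otimes\left(\fas{E}\otimes\A^{\otimes-2}\right),
\]
which exposes simultaneously the $M$-regular sheaf $\fas{E}\otimes\A^{\otimes-2}$ (by hypothesis) and the line bundle $\A^{\otimes 2n+2}$, which is locally free and, being ample, satisfies I.T. with index $0$.

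First I would apply Theorem \ref{cor:m-reg} to $\F:=\fas{E}\otimes\A^{\otimes-2}$ and $\fas{H}:=\A^{\otimes 2n+2}$. This gives, for \emph{any} non-empty open $U\subseteq\w X$, the surjectivity of
\[
\bigoplus_{[\alpha]\in U}H^0(\fas{E}\otimes\A^{\otimes-2}\otimes\alpha)\otimes H^0(\A^{\otimes 2n+2}\otimes\alpha^\vee)\longrightarrow H^0(\A^{\otimes 2n}\otimes\fas{E}),
\]
so that $H^0(\A^{\otimes 2n}\otimes\fas{E})$ is spanned by products $u\cdot v$ with $u\in H^0(\fas{E}\otimes\A^{\otimes-2}\otimes\alpha)$ and $v\in H^0(\A^{\otimes 2n+2}\otimes\alpha^\vee)$, as $[\alpha]$ ranges over $U$.

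Next I would break up the second factor using the $(+)$-eigenspace. By Theorem \ref{prop:bella}(1) there is a non-empty open $U_{0}\subseteq\w X$ such that, for every $n\geq 1$ and every $[\gamma]\in U_{0}$, the map $m_{\gamma}^{+}\colon H^0(\A^{\otimes 2n})^{+}\otimes H^0(\A^{\otimes 2}\otimes\gamma)\to H^0(\A^{\otimes 2n+2}\otimes\gamma)$ is surjective. Taking $\gamma=\alpha^{\vee}$ and choosing for $U$ above the non-empty open set $\{[\alpha]\mid[\alpha^{\vee}]\in U_{0}\}$ (non-empty because inversion is an automorphism of $\w X$), I obtain that each $v\in H^0(\A^{\otimes 2n+2}\otimes\alpha^{\vee})$ is a sum of products $s^{+}\cdot t$ with $s^{+}\in H^0(\A^{\otimes 2n})^{+}$ and $t\in H^0(\A^{\otimes 2}\otimes\alpha^{\vee})$. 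Substituting, $H^0(\A^{\otimes 2n}\otimes\fas{E})$ is spanned by triple products $s^{+}\cdot t\cdot u$.

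The final step is the regrouping, which is also where the one genuinely delicate point lies: multiplication of global sections is associative, so $s^{+}\cdot t\cdot u=s^{+}\cdot(t\cdot u)$, and here $t\cdot u\in H^0(\A^{\otimes 2}\otimes\alpha^{\vee}\otimes\fas{E}\otimes\A^{\otimes-2}\otimes\alpha)=H^0(\fas{E})$, all the twists by $\A^{\otimes 2}$, $\A^{\otimes-2}$ and by $\alpha^{\vee},\alpha$ cancelling. Hence $s^{+}\cdot(t\cdot u)$ is exactly the image under \eqref{eq:**} of $s^{+}\otimes(t\cdot u)\in H^0(\A^{\otimes 2n})^{+}\otimes H^0(\fas{E})$, and since such elements span the target, \eqref{eq:**} is surjective. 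The main obstacle is therefore not cohomological but bookkeeping: one must choose $U$ compatibly with $U_{0}$ through the inversion of $\w X$, keep track of the $\alpha$-twists so that they cancel after regrouping, and check that $\A^{\otimes 2n+2}$ really satisfies the I.T. $(0)$ hypothesis of Theorem \ref{cor:m-reg}; no vanishing beyond these two cited inputs is needed.
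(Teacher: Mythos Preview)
Your argument is correct and follows essentially the same route as the paper. The paper packages the same two inputs---Theorem~\ref{prop:bella}(1) for the surjectivity of $m_\alpha^+$ on a non-empty open set, and Theorem~\ref{lem:kempf-pa2.2} applied to the $M$-regular sheaf $\fas{E}\otimes\A^{\otimes-2}$ and the I.T.(0) line bundle $\A^{\otimes 2n+2}$---into a single commutative square and reads off the surjectivity of $g$ from the surjectivity of the left and bottom arrows; your ``triple product and regrouping'' is exactly the element-level unwinding of that diagram chase.
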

Before proceeding with the proof we will state an immediate
corollary of this Lemma, that reduces our problem to a
$M$-regularity problem.
\begin{cor}\label{lem:reduction}
 If $M_{W_n}^{\otimes
p}\otimes\A^{\otimes
2(nh-1)}$ is $M$-regular, then \eqref{eq:*} is surjective.
\end{cor}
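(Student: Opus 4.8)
The plan is to reduce the surjectivity of \eqref{eq:*} directly to an application of Lemma \ref{lem:redmreg}, by making the right choice of the coherent sheaf $\fas{E}$. First I would set
\[
\fas{E}:=M_{W_n}^{\otimes p}\otimes\A^{\otimes 2nh},
\]
so that the target of the multiplication map in \eqref{eq:*} becomes exactly $H^0(X,\A^{\otimes 2n}\otimes\fas{E})$, and the map \eqref{eq:*} itself becomes precisely the multiplication map \eqref{eq:**} appearing in Lemma \ref{lem:redmreg}. Indeed, with this choice one has $\pi^*(M_{A^{\otimes n}})^{\otimes p}\otimes\A^{\otimes 2nh}=M_{W_n}^{\otimes p}\otimes\A^{\otimes 2nh}=\fas{E}$, and $\fas{E}\otimes\A^{\otimes 2n}=M_{W_n}^{\otimes p}\otimes\A^{\otimes 2n(h+1)}$, which matches the source and target of \eqref{eq:*} after tensoring by $\A^{\otimes 2n}$.

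Next I would check the hypothesis of Lemma \ref{lem:redmreg}, namely that $\fas{E}\otimes\A^{\otimes -2}$ is $M$-regular. With the above choice this twisted sheaf is
\[
\fas{E}\otimes\A^{\otimes -2}=M_{W_n}^{\otimes p}\otimes\A^{\otimes 2nh}\otimes\A^{\otimes -2}=M_{W_n}^{\otimes p}\otimes\A^{\otimes 2(nh-1)},
\]
which is exactly the sheaf assumed to be $M$-regular in the statement of the Corollary. Thus the hypothesis of Lemma \ref{lem:redmreg} is satisfied verbatim, and the Lemma yields the surjectivity of \eqref{eq:**}, i.e. the surjectivity of \eqref{eq:*}, for every $n\geq 1$. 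This completes the argument.

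I do not expect any genuine obstacle here: the Corollary is essentially a bookkeeping restatement of Lemma \ref{lem:redmreg} with the specific sheaf $\fas{E}=M_{W_n}^{\otimes p}\otimes\A^{\otimes 2nh}$ plugged in. The only point requiring a modicum of care is matching the twists correctly—verifying that the factor $\A^{\otimes -2}$ demanded by the $M$-regularity hypothesis of the Lemma combines with $\A^{\otimes 2nh}$ to produce precisely the power $\A^{\otimes 2(nh-1)}$ named in the Corollary—but this is immediate once the identification $\fas{E}=M_{W_n}^{\otimes p}\otimes\A^{\otimes 2nh}$ is fixed. All the substantive content has already been absorbed into Lemma \ref{lem:redmreg}, whose proof is deferred.
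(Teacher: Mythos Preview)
Your proof is correct and matches the paper's approach exactly: the paper states this Corollary as an immediate consequence of Lemma \ref{lem:redmreg} (with no separate proof), and your argument simply spells out the obvious substitution $\fas{E}=M_{W_n}^{\otimes p}\otimes\A^{\otimes 2nh}$ that makes this immediate.
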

\begin{proof}[Proof of the Lemma.]
By Proposition \ref{mykhaled: 2}(1) we know that
$V^1(M_{W_n}\otimes\A^{\otimes2})$ a proper closed subset of $\w
X$. Therefore there exists an open set $\w{U}_0\subseteq\Pic^0(X)$ such that $\w{U}_0\cap
V^1(M_{W_n}\otimes\A^{\otimes2})=\emptyset$.
Now observe the following commutative diagram.
%\begin{footnotesize}
\[
\xymatrix{%
\displaystyle{\bigoplus_{\alpha\in\widehat{U}_0}}\Coo{
\A^{\otimes 2n}}^+\otimes\Coo{
\A^{\otimes2}\otimes
\alpha}\otimes\Coo{\fas{E}\otimes\A^{\otimes-2}\otimes\alpha^\vee
} \ar[dr] % 
\ar[dd]_f
&\\
&\hspace{-10mm}\Coo{\A^{\otimes2n}}^+\otimes\Coo{
\fas{E}}\ar[dd]^g \\
\displaystyle{\bigoplus_{\alpha\in\widehat{U}_0}}\Coo{\A^{
\otimes 2n+2}\otimes
\alpha}\otimes\Coo{\fas{E}\otimes\A^{\otimes-2}
\otimes\alpha^\vee } \ar [dr]^h&\\
&\hspace{-0mm}\Coo{\fas{E}\otimes\A^{\otimes2n}}
}
\]%\end{footnotesize}

\noindent The map $f=\oplus \sottosopra{m}{\alpha}{+}$ is surjective by our choice of the set
$\w{U}_0$, the map $h$ is surjective by
$M$-regularity hypothesis together with Theorem
\ref{lem:kempf-pa2.2}. Thus $g$ is necessarily surjective.
%\[
%\xymatrix{%
%\displaystyle{\bigoplus_{\alpha\in\widehat{U}_0}}\Coo{
%\A^{\otimes 2n}}^+\otimes\Coo{
%\A^{\otimes2}\otimes
%\alpha}\otimes\Coo{M_{W_n}^{\otimes
%p}\otimes\A^{\otimes2nh-2}\otimes\alpha^\vee}\ar[dr]%  
%\ar[dd]_f
%&\\
%&\hspace{-30mm}\Coo{\A^{\otimes2n}}^+\otimes\Coo{
%M_{W_n}^{\otimes
%p}\otimes\A^{\otimes2nh}\otimes\alpha^\vee}\ar[dd]^g \\
%\displaystyle{\bigoplus_{\alpha\in\widehat{U}_0}}\Coo{\A^{
%\otimes 2n+2}\otimes
%\alpha}\otimes\Coo{M_{W_n}^{\otimes
%p}\otimes\A^{\otimes2nh-2}\otimes\alpha^\vee}\ar[dr]^h&\\
%&\hspace{-20mm}\Coo{M_{W_n}^{\otimes
%p}\otimes\A^{\otimes2n(h+1)}}
%}
%\]%\end{footnotesize}

\end{proof}

\hfill\\
\noindent\textbf{Step 3:} \textit{Conclusions.}
By Corollary \ref{lem:reduction} we are reduced to prove that $M_{W_n}^{\otimes p}\otimes\A^{\otimes 2(nh-1)}$ is
$M$-regular for every $h\geq r$. This is proved in the next two statements.
\begin{prop}\label{prop: it0}
 Let $p$ be a positive integer. Then 
%\begin{enumerate}
$M_{W_n}^{\otimes
p}\otimes\A^{\otimes m}$ satisfies I.T. with index 0 (and
hence it is
$M$-regular) for every $m\geq
2p+1$
%\end{enumerate}
\end{prop}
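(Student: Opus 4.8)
The plan is to argue by induction on $p$, the base case $p=0$ being the classical fact that the ample line bundle $\A^{\otimes m}$ satisfies I.T.$(0)$ for every $m\geq 1$. Assume the statement for $p-1$, i.e.\ that $M_{W_n}^{\otimes(p-1)}\otimes\A^{\otimes m'}$ is I.T.$(0)$ for every $m'\geq 2(p-1)+1=2p-1$. Tensoring the defining sequence $0\to M_{W_n}\to W_n\otimes\OO{X}\to\A^{\otimes 2n}\to 0$ with the locally free sheaf $M_{W_n}^{\otimes(p-1)}\otimes\A^{\otimes m}\otimes\alpha$ gives, for each $\alpha\in\Pic^0(X)$, the short exact sequence
\[
0\to M_{W_n}^{\otimes p}\otimes\A^{\otimes m}\otimes\alpha\to W_n\otimes M_{W_n}^{\otimes(p-1)}\otimes\A^{\otimes m}\otimes\alpha\to M_{W_n}^{\otimes(p-1)}\otimes\A^{\otimes(m+2n)}\otimes\alpha\to 0,
\]
and I would read off the vanishing of $H^i(M_{W_n}^{\otimes p}\otimes\A^{\otimes m}\otimes\alpha)$ for $i>0$ from its long exact sequence.

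For $i\geq 2$ the group $H^i(M_{W_n}^{\otimes p}\otimes\A^{\otimes m}\otimes\alpha)$ is squeezed between $H^{i-1}(M_{W_n}^{\otimes(p-1)}\otimes\A^{\otimes(m+2n)}\otimes\alpha)$ and $W_n\otimes H^i(M_{W_n}^{\otimes(p-1)}\otimes\A^{\otimes m}\otimes\alpha)$. Both vanish by the inductive hypothesis: the first because $m+2n\geq m\geq 2p-1$ and $i-1\geq 1$, the second because $m\geq 2p-1$ and $i\geq 1$. Hence only $i=1$ survives. There, since $H^1(M_{W_n}^{\otimes(p-1)}\otimes\A^{\otimes m}\otimes\alpha)=0$ by induction, the long exact sequence identifies $H^1(M_{W_n}^{\otimes p}\otimes\A^{\otimes m}\otimes\alpha)$ with the cokernel of the multiplication map
\[
\mu_\alpha\colon W_n\otimes H^0(M_{W_n}^{\otimes(p-1)}\otimes\A^{\otimes m}\otimes\alpha)\longrightarrow H^0(M_{W_n}^{\otimes(p-1)}\otimes\A^{\otimes(m+2n)}\otimes\alpha)
\]
induced by the inclusion $W_n\subseteq H^0(\A^{\otimes 2n})$. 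Thus the whole proposition reduces to showing that $\mu_\alpha$ is surjective for \emph{every} $\alpha\in\Pic^0(X)$.

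This surjectivity is exactly the situation addressed by Lemma \ref{lem:redmreg}, applied to the coherent sheaf $\fas{E}:=M_{W_n}^{\otimes(p-1)}\otimes\A^{\otimes m}\otimes\alpha$: that lemma yields surjectivity of $W_n\otimes H^0(\fas{E})\to H^0(\A^{\otimes 2n}\otimes\fas{E})$ as soon as $\fas{E}\otimes\A^{\otimes-2}=M_{W_n}^{\otimes(p-1)}\otimes\A^{\otimes(m-2)}\otimes\alpha$ is $M$-regular. By the inductive hypothesis $M_{W_n}^{\otimes(p-1)}\otimes\A^{\otimes(m-2)}$ satisfies I.T.$(0)$ — hence is $M$-regular, a property preserved under twisting by $\alpha\in\Pic^0(X)$ — precisely when $m-2\geq 2p-1$, that is, when $m\geq 2p+1$. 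This is the only place where the numerical bound is used in full, and it is exactly what forces the threshold $2p+1$. With $\mu_\alpha$ surjective for all $\alpha$, its cokernel vanishes, so $H^1=0$ as well, completing the induction.

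I expect the $i=1$ step to be the main obstacle: unlike the higher cohomology it cannot be disposed of by a formal squeeze and genuinely requires surjectivity of $\mu_\alpha$ for \emph{every} $\alpha$, not merely for general $\alpha$. This is why Proposition \ref{prop:bella}, which only controls an open subset of $\Pic^0(X)$, does not suffice here, and why the argument must be routed through Lemma \ref{lem:redmreg}, whose proof combines Khaled's all-$\alpha$ surjectivity for line bundles (Proposition \ref{khaledfr:1.4.4}) with the Pareschi--Popa multiplication theorem (Theorem \ref{lem:kempf-pa2.2}). Once that input is granted, the remainder of the induction is bookkeeping on the long exact sequence.
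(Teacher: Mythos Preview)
Your proof is correct and follows essentially the same route as the paper: induction on $p$, the short exact sequence obtained by tensoring the defining sequence of $M_{W_n}$, reduction of the $H^1$-vanishing to surjectivity of the multiplication map $\mu_\alpha$, and appeal to Lemma \ref{lem:redmreg} together with the inductive hypothesis applied to $M_{W_n}^{\otimes(p-1)}\otimes\A^{\otimes(m-2)}$. The only cosmetic difference is that the paper takes $p=1$ as the base case and handles it by directly invoking Khaled's Proposition \ref{khaledfr:1.4.4}, whereas you start the induction at $p=0$ and absorb the $p=1$ case into the general inductive step via Lemma \ref{lem:redmreg}; both are valid.
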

Note that Theorem A follows at once from
this Proposition taking $m=2nr-2$.
\begin{proof}
 We will proceede by induction on $p$.\par
\textit{Case $p=1$}. Let us consider the  following exact sequence:
%\begin{small}
\begin{equation}\label{eq:star}
 \escorta{M_{W_{n}}\otimes\A^{\otimes
m}\otimes\alpha}{W_n\otimes\A^{\otimes m}
\otimes\alpha } { \A^{\otimes 2n+m } \otimes\alpha}.
\end{equation}
%\end{small}
One can easily see that the vanishing of the higher
cohomology of $M_{W_{n}}\otimes\A^{\otimes
m}\otimes\alpha$ is implied by:
\begin{itemize}\item[(i)] the vanishing of the higher cohomology
of
$\A^{\otimes m}\otimes\alpha$ and
\item[(ii)] the surjectivity of the following multiplication map:
\[\multshortp{\A^{\otimes 2n}}{\A^{\otimes m}\otimes
\alpha}{\A^{\otimes 2n+m}\otimes\alpha}.\]
\end{itemize}
Condition (i) holds for every $\alpha$ as long as $m\geq1$,
while, thanks to Khaled result (see Proposition
\ref{khaledfr:1.4.4}), we know condition (ii) is satisfied for every
$\alpha$ as
long as $m\geq3$.\par
\textit{Case $p>1$}. Suppose now that $p>1$ and take any $\alpha\in\widehat X$. By twisting
\eqref{eq:star}
by $M_{W_n}^{\otimes p-1}$ we can observe that the vanishing of
higher cohomology of
$M_{W_{n}}^{\otimes p}\otimes\A^{\otimes
m}\otimes\alpha$  is implied by
\begin{itemize}\item[(i)] the vanishing of the higher cohomology
of
$M_{W_{n}}^{\otimes p-1}\otimes\A^{\otimes
m}\otimes\alpha$ and
\item[(ii)] the surjectivity of the following multiplication map:
\end{itemize}
\vspace{-3mm}
%\begin{small}
\begin{equation}\label{eq:ultima}\multshortp{\A^{\otimes
2n}}{M_{W_{n}}^{\otimes p-1}\otimes\A^{\otimes
m}\otimes\alpha}{M_{W_{n}}^{\otimes p-1}\otimes\A^{\otimes
2n+m}\otimes\alpha}.
\end{equation}
%\end{small}

\noindent
By induction (i) holds as long as $m\geq 2p-1$. Thanks to Lemma
\ref{lem:reduction}
and
Lemma \ref{lem:kempf-pa2.2} we know that  if $M_{W_{n}}^{\otimes
p-1}\otimes\A^{\otimes
m-2}\otimes\alpha$ satisfies I.T. with index 0, then 
\eqref{eq:ultima}
is surjective. But we use induction again and we get that
$M_{W_{n}}^{\otimes
p-1}\otimes\A^{\otimes
m-2}\otimes\alpha$ is I.T. with index 0 whenever $m-2\geq2p-1$,
that is
whenever $m\geq2p+1$ and hence the statement is proved.
\end{proof}
\begin{prop}
 In the notations above, take $p\geq 1$ an integer. If
 $\A$ does not have a base divisor, then
$M_{W_n}^{\otimes p}\otimes \A^{\otimes m}$ is $M$-regular for
every $m\geq 2p$.
\end{prop}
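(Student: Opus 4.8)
The plan is to recycle the induction on $p$ from the proof of Proposition~\ref{prop: it0}, but to bound the \emph{codimension} of the loci $V^i(M_{W_n}^{\otimes p}\otimes\A^{\otimes m})$ instead of proving their emptiness, spending the hypothesis that $\A$ has no base divisor exactly once, at the bottom of the induction. The engine will again be the sequence obtained by tensoring the tautological sequence of $M_{W_n}$ by $M_{W_n}^{\otimes p-1}\otimes\A^{\otimes m}\otimes\alpha$,
\[\escorta{M_{W_n}^{\otimes p}\otimes\A^{\otimes m}\otimes\alpha}{W_n\otimes M_{W_n}^{\otimes p-1}\otimes\A^{\otimes m}\otimes\alpha}{M_{W_n}^{\otimes p-1}\otimes\A^{\otimes 2n+m}\otimes\alpha},\]
from whose long exact sequence I would read off that $[\alpha]\in V^i$ of the left-hand sheaf forces $[\alpha]$ into $V^{i-1}$ of the right-hand sheaf or into $V^i$ of the middle one, and then estimate each contribution.

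First I would dispose of the base case $p=1$. For $i\ge 2$ both flanking sheaves, $\A^{\otimes m}$ and $\A^{\otimes 2n+m}$, are I.T.$(0)$, so the relevant cohomology vanishes and $V^i=\emptyset$. For $i=1$ the nonvanishing of $H^1$ is equivalent to the non-surjectivity of $H^0(\A^{\otimes 2n})^+\otimes H^0(\A^{\otimes m}\otimes\alpha)\to H^0(\A^{\otimes 2n+m}\otimes\alpha)$; for $m\ge 3$ this is onto for every $\alpha$ by Proposition~\ref{khaledfr:1.4.4}, while for $m=2$ it is exactly $m_\alpha^+$, and here I would invoke Theorem~\ref{mykhaled: 2}(2), whose codimension-two conclusion uses precisely that $\A$ has no base divisor. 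Either way $\mathrm{codim}\,V^1\ge 2$, giving $M$-regularity of $M_{W_n}\otimes\A^{\otimes m}$ for all $m\ge 2$.

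Next, for $p>1$ and $m\ge 2p$, I would split on $i$. For $i\ge 2$ I would kill the right-hand contribution by noting that $2n+m\ge 2(p-1)+1$, so Proposition~\ref{prop: it0} makes $M_{W_n}^{\otimes p-1}\otimes\A^{\otimes 2n+m}$ satisfy I.T.$(0)$; this reduces $V^i$ to $V^i(M_{W_n}^{\otimes p-1}\otimes\A^{\otimes m})$, of codimension $\ge i+1$ by the inductive hypothesis (valid since $m>2(p-1)$). For $i=1$ the only new thing to control is the non-surjectivity locus of
\[H^0(\A^{\otimes 2n})^+\otimes H^0(M_{W_n}^{\otimes p-1}\otimes\A^{\otimes m}\otimes\alpha)\longrightarrow H^0(M_{W_n}^{\otimes p-1}\otimes\A^{\otimes 2n+m}\otimes\alpha),\]
which I would recognize as the map of Lemma~\ref{lem:redmreg} for $\fas{E}=M_{W_n}^{\otimes p-1}\otimes\A^{\otimes m}\otimes\alpha$. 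Since $\fas{E}\otimes\A^{\otimes-2}=M_{W_n}^{\otimes p-1}\otimes\A^{\otimes m-2}\otimes\alpha$ is $M$-regular by induction ($m-2\ge 2(p-1)$, and $M$-regularity is stable under twist by $\alpha\in\Pic^0(X)$), the lemma forces surjectivity for every $\alpha$; hence this locus is empty and the remaining contribution $V^1(M_{W_n}^{\otimes p-1}\otimes\A^{\otimes m})$ again has codimension $\ge 2$. This completes the induction.

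The hard part is really just the base case: the codimension-two bound at $(p,m)=(1,2)$ is the only genuinely new input over Proposition~\ref{prop: it0}, and it is exactly where the absence of a base divisor is needed, through Theorem~\ref{mykhaled: 2}(2); dropping that hypothesis would only give back the weaker threshold $m\ge 2p+1$. All the rest is bookkeeping — the cases $i\ge 2$ ride on the I.T.$(0)$ already established in Proposition~\ref{prop: it0}, and the $i=1$ multiplication maps are handled verbatim by Lemma~\ref{lem:redmreg}. The one point to keep straight is that gaining a unit in $m$ is possible precisely because we now allow $V^1$ to be nonempty of codimension $\ge 2$, i.e. we trade full I.T.$(0)$ for honest $M$-regularity.
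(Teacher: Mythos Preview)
Your proof is correct. The base case $p=1$ matches the paper's exactly. For the inductive step $p>1$ you take a cleaner route than the paper: there the author fixes $\alpha\in V^1(M_{W_n}^{\otimes p}\otimes\A^{\otimes 2p})$, uses the inductive $M$-regularity of $M_{W_n}^{\otimes p-1}\otimes\A^{\otimes 2(p-1)}$ together with Theorem~\ref{cor:m-reg} to produce finitely many auxiliary points $\beta_1,\dots,\beta_N$, and then runs a commutative-diagram chase showing $\alpha\in\bigcup_k Z_k$, each $Z_k$ being the codimension-$\ge 2$ failure locus of $m_{\alpha\otimes\beta_k}^+$ from Theorem~\ref{prop:bella}(2). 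You instead invoke Lemma~\ref{lem:redmreg} directly with $\fas{E}=M_{W_n}^{\otimes p-1}\otimes\A^{\otimes m}\otimes\alpha$, whose twist $\fas{E}\otimes\A^{\otimes-2}$ is $M$-regular by the inductive hypothesis (and stability of $M$-regularity under $\Pic^0$-twist). This is legitimate---the lemma already packages the same Pareschi--Popa machinery the paper unrolls by hand---and in fact yields a bit more: the multiplication map is surjective for \emph{every} $\alpha$, so for $p\ge 2$ and $m\ge 2p$ you actually obtain $V^1(M_{W_n}^{\otimes p}\otimes\A^{\otimes m})=\emptyset$, i.e.\ I.T.$(0)$, not merely $M$-regularity. (Your aside that the ``remaining contribution'' $V^1(M_{W_n}^{\otimes p-1}\otimes\A^{\otimes m})$ has codimension $\ge 2$ is overly cautious: since $m\ge 2p\ge 2(p-1)+1$, Proposition~\ref{prop: it0} already makes that locus empty.) As you note, the no-base-divisor hypothesis is spent exactly once, at $(p,m)=(1,2)$.
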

Again Theorem C, follows at once after taking $m=2(nr-1)$.
\begin{proof}
For $m\geq 2p+1$ the statement is a direct consequence of the
Proposition above, hence we can limit ourselves to the
case $m=2p$. We will proceede by induction on $p$. \\

\textit{Case $p=1$}: We want to prove that $\mathrm{codim}\:
V^i(M_{W_n}\otimes\A^{\otimes 2})>i$ for every $i\geq1$. From
the vanishing of the higher cohomology of
$\A^{\otimes 2}\otimes\alpha$ for every $\alpha\in\Pic^0(X)$ we
know that the loci 
\[V^i(M_{W_n}\otimes\A^{\otimes 2})=\emptyset\quad\text{for
every $i\geq 2$}.\]
Recall that the locus $V^1(M_{W_n}\otimes\A^{\otimes 2})$ is the
locus of
points $\alpha\in\w X$ such that the multiplication
\[W_n\otimes H^0(\A^{\otimes 2}\otimes \alpha)\longrightarrow
H^0(A^{\otimes 2n+2}\otimes\alpha)\]
is not surjective. We know from Proposition
\ref{prop:bella} that, if $\A$ has not a base divisor,
then this locus has at least codimension 2 and hence the
statement is proved.\\

\textit{Case $p>1$} Take
$\alpha\in\w X$. Consider the following exact sequence
\begin{small}
\begin{equation}\label{eq:ind}\escorta{M_{W_n}^{\otimes
p}\otimes\A^{\otimes
2p}\otimes\alpha}{W_n\otimes M_{W_n}^{\otimes
p-1}\otimes\A^{\otimes
2p}\otimes\alpha}{M_{W_n}^{\otimes p-1}\otimes\A^{\otimes
2p+n}\otimes\alpha}\end{equation}
\end{small}
From Proposition \ref{prop: it0}(a) we know that for every
$i\geq 1$ both $H^i(M_{W_n}^{\otimes p-1}\otimes\A^{\otimes
2p}\otimes\alpha)$ and $H^i(M_{W_n}^{\otimes
p-1}\otimes\A^{\otimes
2p+n}\otimes\alpha)$ vanish. Thus the loci $V^i(M_{W_n}^{\otimes
p}\otimes\A^{\otimes
2p})$ are empty for every $i\geq 2$. It remains to
show that that 
$$\codim\: V^1(M_{W_n}^{\otimes
p}\otimes\A^{\otimes
2p})\geq 2.$$ 
As before one may observe that this
locus is exactly the locus in $\w X$ where the following
multiplication map fails to be surjective:
\[W_n\otimes H^0(M_{W_n}^{\otimes p-1}\otimes\A^{\otimes
2p}\otimes\alpha)\longrightarrow H^0(M_{W_n}^{\otimes
p-1}\otimes\A^{\otimes
2p+2n}\otimes\alpha).\]
In fact, taking cohomology in \eqref{eq:ind} and observing that
for every $[\alpha]\in\w X$, $h^1( M_{W_n}^{\otimes
p-1}\otimes\A^{\otimes
2p}\otimes\alpha)=0$, due to  Proposition \ref{prop: it0}, we
have the conclusion following the same argument as in the case
$p=0$.\par
Now take $[\alpha]\in V^1(M_{W_n}^{\otimes
p}\otimes\A^{\otimes
2p})$. By inductive hypothesis the sheaf $M_{W_n}^{\otimes
p-1}\otimes\A^{\otimes
2(p-1)}$ is $M$-regular.
Corollary \ref{cor:m-reg} implies that there exist a
positive
integer $N$ and $[\beta_1],\ldots,[\beta_N]\in\w X$ such that the
following is surjective.
\begin{small}
\[
\xymatrix{
\displaystyle{\bigoplus_{k=1}^N}H^0({
\A^{\otimes 2n+2}\otimes\beta_k\otimes\alpha})
\otimes H^0({M_{W_n}^{\otimes
p-1}\otimes\A^{\otimes
2(p-1)}\otimes\beta_k^\vee})\ar[r]^{\hspace*{2.5cm}m_{
\beta_k } }  &H^0({M_{W_n}^{\otimes
p-1}\otimes\A^{\otimes
2p+2n}\otimes\alpha})
}
\]
\end{small}
Consider the commutative square
\begin{footnotesize}
\[
\xymatrix{%
\displaystyle{\bigoplus_{k=1}^N}\Coo{
\A^{\otimes 2n}}^+\otimes\Coo{
\A^{\otimes2}\otimes
\alpha\otimes\beta_k}\otimes\Coo{M_{W_n}^{\otimes
p-1}\otimes\A^{\otimes2p-2}\otimes\beta_k^\vee}\ar[dr]%  
\ar[dd]
&\\
&\hspace{-35mm}\Coo{\A^{\otimes2n}}^+\otimes\Coo{
M_{W_n}^{\otimes
p-1}\otimes\A^{\otimes2p}\otimes\alpha}\ar[dd] \\
\displaystyle{\bigoplus_{k=1}^N}\Coo{\A^{
\otimes 2n+2}\otimes
\alpha\otimes\beta_k}\otimes\Coo{M_{W_n}^{\otimes
p-1}\otimes\A^{\otimes2(p-1)}\otimes\beta_k^\vee}\ar[dr]&\\
&\hspace{-12mm}\Coo{M_{W_n}^{\otimes
p-1}\otimes\A^{\otimes2n +2p}}
}
\]\end{footnotesize}
The right arrow is not surjective by our choice of $\alpha$. The
bottom arrow is surjective, hence the left arrow could not be
surjective. Therefore $$\alpha\in
\bigcup_{k=1}^NZ_k,$$ where $Z_k$ stands for the locus of 
$[\beta]\in\w X$ such that the multiplication map
\begin{equation}\label{eq:rmk}\Coo{
\A^{\otimes 2n}}^+\otimes\Coo{
\A^{\otimes2}\otimes
\beta\otimes\beta_k}\rightarrow\Coo{\A^{
\otimes 2n+2}\otimes
\beta\otimes\beta_k}\end{equation}
   
fails to be surjective. Thus one has
that
\[V^1(M_{W_n}^{\otimes
p}\otimes\A^{\otimes
2p})\subseteq\bigcup_{k=1}^NZ_k.
\]
By Theorem \ref{prop:bella}(2) the loci $Z_k$ have
codimension at least 2, therefore
$$\codim\:V^1(M_{W_n}^{\otimes
p}\otimes\A^{\otimes
2p})\geq\codim\;\bigcup_{k=1}^NZ_k\geq 2$$ and
the
statement is proved.
\end{proof}

\subsection*{Acknowledgments}
This is part of my Ph. D thesis defended at the Mathmatics department of Università degli studi Roma TRE. I would like to thank my advisor G. Pareschi for introducing me to the subject of syzygies and for many suggestions. I am very indebted to C. Ciliberto and F. Viviani for many engaging mathematical conversations.\par
Part of this work was carried out during the tenure of an ERCIM "Alain Bensoussan" Fellowship Programme. This Programme is supported by the Marie Curie Cofunding of Regional, National and International Programmes (COFUND) of the European Commission.

\end{document}